\documentclass[a4paper,twoside]{amsart}
\usepackage{amsmath,amssymb,amsthm,dsfont,graphicx}
\usepackage[colorlinks=true, linkcolor=black, citecolor=blue, urlcolor=red]{hyperref}
\usepackage[english]{babel}
\usepackage[lite]{amsrefs}

\newcommand   {\greeke}      {\varepsilon                                      }

\newcommand   {\greekee}     {\eta                                             }

\newcommand   {\greekl}      {\lambda                                          }
\newcommand   {\greekL}      {\Lambda                                          }

\newcommand   {\greekp}      {\pi                                              }

\newcommand   {\greekph}     {\varphi                                          }

\newcommand   {\greekO}      {\Omega                                           }
\newcommand   {\scripta}     {{\scalebox{1.10}{$\mathfrak{a}$}}                }

\newcommand   {\scriptb}     {{\scalebox{1.10}{$\mathfrak{b}$}}                }

\newcommand   {\scriptB}     {\mbox{$\mathfrak{B}$}                            }

\newcommand   {\scriptI}     {\mbox{$\mathfrak{I}$}                            }

\newcommand   {\scriptm}     {{\scalebox{1.10}{$\mathfrak{m}$}}                }

\newcommand   {\scriptN}     {\mbox{$\mathfrak{N}$}                            }

\newcommand   {\scriptp}     {{\scalebox{1.10}{$\mathfrak{p}$}}                }

\newcommand   {\scriptq}     {{\scalebox{1.10}{$\mathfrak{q}$}}                }

\newcommand   {\scriptu}     {{\scalebox{1.10}{$\mathfrak{u}$}}                }

\newcommand   {\scriptU}     {\mbox{$\mathfrak{U}$}                            }
\newcommand   {\smallscriptU}{{\scalebox{0.70}{$\mathfrak{U}$}}                }
\newcommand   {\scriptv}     {{\scalebox{1.10}{$\mathfrak{v}$}}                }

\newcommand   {\scriptW}     {\mbox{$\mathfrak{W}$}                            }

\newcommand   {\setC}        {\mathds{C}                                       }

\newcommand   {\setI}        {\mathds{I}                                       }

\newcommand   {\setN}        {\mathds{N}                                       }

\newcommand   {\setQ}        {\mathds{Q}                                       }

\newcommand   {\setV}        {\mathds{V}                                       }

\newcommand   {\Where}       {\mbox{ where }                                   }

\newcommand   {\SuchThat}    {\mbox{ such that }                               }

\newcommand   {\A}           {\forall \,                                       }
\newcommand   {\E}           {\exists \,                                       }

\newcommand   {\noneq}       {\not =                                           }
\newcommand   {\nonin}       {\not \in                                         }
\renewcommand {\implies}     {\, \Longrightarrow \,                            }

\newcommand   {\set}[1]      {\left\{ #1 \right\}                              }

\newcommand   {\setin}       {\subseteq \,                                     }

\newcommand   {\setinn}      {\subset                                          }

\newcommand   {\C}           {\scalebox{0.9}{$\complement$}                    }

\newcommand   {\onto}        {\, \twoheadrightarrow \,                         }

\newcommand   {\isoto}       {\, \stackrel{\raisebox{-0.5ex
                                 }[0pt]{$\sim$}}{\longrightarrow} \,           }

\newcommand   {\abs}[1]      {| #1 |                                           }
\newcommand   {\gen}[1]      {\langle #1 \rangle                               }

\newcommand   {\kn}          {\mbox{\rm{ker}}                                  }

\newcommand   {\ideal}       {\unlhd                                           }
\newcommand   {\Modulo}[2]   {\raisebox{3pt}{$#1$} \hspace{-1pt}
                              \raisebox{-1pt}{\scalebox{1.5}{/}}\hspace{1dd}
															\hspace{-2pt} \raisebox{-3pt}{$#2$}              }
\newcommand   {\spec}        {\mbox{\rm{Spec}}                                 }

\newcommand   {\dirlim}      {\varinjlim                                       }
\newcommand   {\invlim}      {\varprojlim                                      }
\newcommand   {\mapatop}[1]  {\stackrel{\hbox{\scriptsize ${#1}$}}{
                              \longrightarrow}                                 }
\newcommand   {\mapbelow}[1] {\stackrel{\longrightarrow}{\hbox{\lower6pt
                              \hbox{\scriptsize ${#1}$}}}                      }

\newcommand   {\mapdown}[1]  {\hspace{0.25cm} \hbox{$\downarrow \raise1pt
                              \hbox{\scriptsize ${#1}$}$}                      }

\newcommand   {\mapsdown}    {\rotatebox[origin=c]{270}{\mbox{$\mapsto$}}      }

\newcounter{claimcounter}

\newtheorem{theorem}{Theorem}[section]
\newtheorem{proposition}[theorem]{Proposition}
\newtheorem{corollary}[theorem]{Corollary}
\newtheorem{lemma}[theorem]{Lemma}

\theoremstyle{definition}
\newtheorem{definition}[theorem]{Definition}
\newtheorem{example}[theorem]{Example}
\newtheorem{remark}[theorem]{Remark}
\allowdisplaybreaks

\title[Quasi-Compactness in Infinite Dimension]{Quasi-Compactness in Infinite Dimension}

\author[A. Bernhard Zeidler]{A.~Bernhard Zeidler}

\address{Mathematisches Institut, Universit\"at T\"ubingen,
Auf der Morgenstelle 10, 72076 T\"ubingen, Germany}
\email{zeidler@math.uni-tuebingen.de}

\subjclass[2020]{13A15,14A04}

\begin{document}

\begin{abstract}
 We give extensive characterizations for an open subset of an affine space of arbitrary
 dimension and, respectively, of an inverse limit of prime spectra to be quasi-compact.
 Among other things weak stability, retro-compactness, and cylinder sets provide equivalent
 criteria in both settings. We also exhibit an example of a non–quasi-compact affine
 space.
\end{abstract}

\maketitle

\section{The main Results}
\label{sec:mainresults}

 In this article, we investigate quasi-compactness of open subsets $U$ in two different
 settings of inverse limits of topological spaces: In affine spaces $F^L$ of arbitrary
 dimension $L$ over an algebraically closed field $F$ on one hand, and in inverse
 limits of prime spectra of noetherian rings on the other. Our main results (Theorems
 \ref{thm:affineresult} and \ref{thm:specquasicompact}) provide extensive lists of
 equivalent conditions characterizing the quasi-compactness of $U$ in these settings.
 Our aim is to provide a synopsis that highlights similarities and differences of these
 settings through complete, elementary proofs.

 Among other things, we relate standard concepts and ideas from the theory of arc
 spaces and motivic integration such as \emph{weak stability} see, e.g.~Denef-Loeser
 \cite[Def.~2.4]{DeLo}, and \emph{cylinder sets} as used by Veys \cite[Def.~3.1]{Veys}
 and others. Furthermore, our characterizing condition (f) in Theorem \ref{thm:affineresult}
 is a necessary requirement in the definition of \emph{stable sets} as given in
 Campesato-Fukui-Kurdyka-Parusiński \cite[Def.~3.33]{CFKP} or Veys \cite[Def.~3.2]{Veys}.
 Our aim is to present a unified, self-contained treatment and a consolidated survey.

 We come to our first, original result. There and throughout the whole article, the
 affine space $F^L$ is endowed with the Zariski topology, that means that its closed
 sets are common zero-loci of sets of polynomials of $F[t_i \mid i \in L]$.

 \begin{theorem} \label{thm:affineresult}
 Let $L$ be any set and $F$ an algebraically closed field of cardinality $\abs{L} <
 \abs{F}$. For $I \setin L$ let $\greekp_I \colon F^L \onto  F^I$ be the canonical
 projection. Let $C \setin F^L$ be a closed set, $\scriptu := \setI(C)$ its vanishing
 ideal in $F[t_i \mid i \in L]$ and $U := F^L \setminus C$ its (open) complement.
 Then the following statements are equivalent:
 \begin{enumerate}
  \item[(a)] $\scriptu$ is finitely generated.
  \item[(b)] $U$ is quasi-compact.
	\item[(c)]
	 $U$ is a \emph{cylinder set}, that is there is a finite subset $I \setin L$ and
	 a quasi-con\discretionary{-}{}{}structible subset $W \setin F^I$ such that $U$
	 is the preimage $U = \greekp_I^{-1}(W)$.
	\item[(d)]
	 $U$ is \emph{retro-compact}, that is for any open, quasi-compact subset $V \setin
	 F^L$ the intersection $U \cap V$ is quasi-compact.
	\item[(e)]
	 Given a set $\greekL$ and for any $\greekl \in \greekL$ a finite set $I(\greekl)
	 \setin L$ and an open set $A_{\greekl} \setin F^{I(\greekl)}$ such that $U$ is
	 covered by the open sets
	 \[ U \ = \ \bigcup_{\greekl \in \greekL} W_{\greekl} \ \ \Where \ \
	    W_{\greekl} \ := \ \greekp_{I(\greekl)}^{-1} \big( A_{\greekl} \big) \]
	 this cover admits a sub-cover consisting only of open base sets of a bounded level.
	 That is, for some finite set $K \setin L$ we have
   \begin{eqnarray*}
	  U          &  = & \bigcup_{\greekl \in \greekL[K]} W_{\greekl} \ \ \Where          \\
	  \greekL[K] & := & \set{\greekl \in \greekL \mid \E B_{\greekl} \setin F^K
		                  \mbox{ open} \ : \ W_{\greekl} = \greekp_{K}^{-1} \big( B_{\greekl})}.
	 \end{eqnarray*}
	\item[(f)]
	 There is a finite subset $J \setin L$ such that $U = \greekp_J^{-1}
	 \big( \greekp_J(U) \big)$.
	\item[(g)]
	 There is a finite subset $J \setin L$ such that $C = \greekp_J^{-1}
	 \big( \greekp_J(C) \big)$.
	\item[(h)]
	 $U$ is \emph{weakly stable}, that is there are a finite subset $K \setin L$ and
	 an open subset $U_K \setin F^K$ such that $U = \greekp_K^{-1} \big( U_K \big)$.
	\item[(i)]
	 There are a finite subset $K \setin L$ and a closed subset $C_K \setin F^K$
	 such that $C = \greekp_K^{-1} \big( C_K \big)$.
 \end{enumerate}
\end{theorem}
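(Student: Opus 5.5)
We plan to organise the proof around two groups of conditions. The purely algebraic and set-theoretic ones (a), (f), (g), (h), (i) will be linked first, and then the topological ones (b), (c), (d), (e) will be attached to them. The key tool is the Nullstellensatz for $F[t_i \mid i\in L]$ in infinitely many variables. The hypothesis $\abs{L}<\abs{F}$ is exactly what makes it hold: the polynomial ring has dimension $<\abs{F}$ over $F$, so every maximal ideal has residue field $F$. Hence $\setI(\setV(\scripta))=\sqrt{\scripta}$, and closed sets correspond bijectively to radical ideals. We will also use repeatedly the identity $\greekp_K(F^L)=F^K$ and the fact that a polynomial in the variables $t_i$, $i\in K$, vanishes on $\greekp_K^{-1}(C_K)$ iff it vanishes on $C_K$.

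\textbf{The algebraic core.} The equivalences (f)$\Leftrightarrow$(g) and (h)$\Leftrightarrow$(i) are complements: $\greekp_J^{-1}\greekp_J$ commutes with complementation once saturation holds, and $U_K=\greekp_K(U)$ is open iff $C_K=\greekp_K(C)$ is closed. For (i)$\Rightarrow$(a), set $\scriptu_K=\setI(C_K)\setin F[t_i\mid i\in K]$. By Hilbert's basis theorem $\scriptu_K$ is finitely generated. Then $\scriptu_K F[t_i\mid i\in L]$ is radical, being the extension of a radical ideal by a polynomial extension, and its zero set is $\greekp_K^{-1}(C_K)=C$. The Nullstellensatz then gives $\scriptu=\scriptu_K F[t_i \mid i \in L]$. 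For (a)$\Rightarrow$(i), take finitely many generators and let $K$ be the finite set of variables occurring in them. Then $C=\setV(\scriptu)=\greekp_K^{-1}(\setV(\scriptu\cap F[t_K]))$, with $C_K$ closed. For (g)$\Rightarrow$(i), we must show that $\greekp_J(C)$ is closed. We do this by noting $\greekp_J(C)=C\cap (F^J\times\{0\})$ under the section $F^J\hookrightarrow F^L$, which is the intersection of closed sets. The converse (i)$\Rightarrow$(g) is trivial.

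\textbf{The topological conditions.} (h)$\Rightarrow$(b): $U_K$ is quasi-compact since $F^K$ is noetherian. A cover of $U$ by basic opens $D(f)$ can be projected: using the section $s\colon F^K\to F^L$, $x\mapsto(x,0)$, the sets $s^{-1}(D(f))$ cover $U_K$. A finite subcover $s^{-1}(D(f_1)),\dots,s^{-1}(D(f_n))$ might not lift directly. The cleaner route is (b)$\Rightarrow$(a)$\Rightarrow$(h)$\Rightarrow$(b), proving (h)$\Rightarrow$(b) algebraically instead. Write $U=\bigcup D(g_j)$ with $g_j$ generating $\scriptu$. If $U=\bigcup_\alpha D(f_\alpha)$, then $\setV(\{f_\alpha\})=C$, so by the Nullstellensatz each $g_j\in\sqrt{(f_\alpha)}$, and finitely many $f_\alpha$ suffice. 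For (b)$\Rightarrow$(a), cover $U$ by $D(f)$ for $f\in\scriptu$ and extract a finite subcover $f_1,\dots,f_n$. Then $\setV(f_1,\dots,f_n)=C$, so $\scriptu=\sqrt{(f_1,\dots,f_n)}$, and this is already equivalent to (i) after enlarging $K$ to the variables of the $f_i$ as above.

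(h)$\Rightarrow$(c) is trivial, since opens are quasi-constructible. For (c)$\Rightarrow$(h), $W=\greekp_I(U)$ is open: its preimage is open, and $\greekp_I$ is open because it is a product projection of Zariski-type coordinate projections. The last fact must be checked via $W=s^{-1}(U)$ with the section $s$. (d)$\Rightarrow$(b) uses $V=F^L$, which is quasi-compact by the same ideal argument applied to the unit ideal. (b)$\Rightarrow$(d) holds because $U\cap V$ has vanishing ideal with radical equal to $\sqrt{\scriptu\cdot\scriptv}$, which is a finitely generated radical up to radical. Here we use that we worked with radicals of finitely generated ideals, and interpret (a) in that form. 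For (e), (h) gives a cover for which quasi-compactness allows a finite subcover, and we take $K$ to be the union of the finitely many $I(\greekl)$, enlarging each $A_\greekl$ by a product. Conversely, we apply (e) to the cover by all $D(f)$, $f\in\scriptu$, to get $U=\greekp_K^{-1}$ of a union of opens, which is (h).

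\textbf{Main obstacle.} The key point is the Nullstellensatz in $\abs{L}$ variables under $\abs{L}<\abs{F}$. We would prove it by the Rabinowitsch trick plus the cardinality argument: if $\scriptm$ is maximal with residue field $E\supsetneq F$, then $E$ contains a transcendental $x$. The elements $1/(x-c)$, $c\in F$, are then $F$-linearly independent, which contradicts $\dim_F E\le\max(\abs{L},\aleph_0)<\abs{F}$. This needs $F$ uncountable when $L$ is finite, or a separate appeal to the classical Nullstellensatz in that case.
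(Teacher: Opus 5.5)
Your proposal is correct, but it is organised differently from the paper at two key points.

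First, consider the decisive step towards quasi-compactness. The paper proves (h)$\implies$(b) as follows: from an arbitrary cover by open base sets it forms the sum $\scriptu[\infty]$ of the extended ideals, compares $\sqrt{\scriptu[\infty]}$ with $\sqrt{\scriptb F[L]}$ via the Nullstellensatz, and invokes Proposition~\ref{pro:limcompact}.(v) to obtain a bounded level $K$. It thus establishes quasi-stability (e) first, and then extracts the finite subcover from the noetherianity of $F^K$. You instead prove (a)$\implies$(b) directly: each of the finitely many generators of $\scriptu$ lies in $\sqrt{(f_\alpha)}$, hence in the radical of finitely many $f_\alpha$. You then recover (e) afterwards from (b) by merging the finitely many index sets. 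This is the same Nullstellensatz-plus-finite-generation mechanism, minus the bookkeeping of levels. The paper's formulation has its own payoff: Proposition~\ref{pro:limcompact} serves arbitrary direct systems and exhibits quasi-stability as an intermediate property, parallel to Theorem~\ref{thm:specquasicompact}.

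Second, for (g)$\implies$(i) and (c)$\implies$(h) you use the continuous section $s\colon x\mapsto(x,0)$ together with the identity $\greekp_J(Y)=s^{-1}(Y)$ for $\greekp_J$-saturated $Y$. The paper instead uses $\setI(\greekp_J^{-1}(X))=\setI(X)F[L]$ for the former, and routes (c) through (d) using EGA's result on Boolean combinations of open retro-compact sets. Your argument is shorter and fully elementary. It also shows that (c), (f), (g), (h), (i) are equivalent over any field, which isolates where $\abs{L}<\abs{F}$ is genuinely used.

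In the other direction, for (b)$\implies$(d) the paper's observation that an intersection of two weakly stable sets is again weakly stable is simpler than your radical computation.

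Two smaller points. Your remark that $\greekp_I$ is open ``because it is a product projection'' is not justified as stated, since the Zariski topology on $F^L$ is not a product topology. It is also not needed: $W=s^{-1}(U)$ already shows that $W$ is open. Finally, your sketch of the infinite-dimensional Nullstellensatz is sound and supplies what the paper imports from the literature. It combines linear independence of the $1/(x-c)$ with the Rabinowitsch trick, and correctly falls back on the classical theorem when $L$ is finite and $F$ countable.
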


 In particular (under the assumptions of \ref{thm:affineresult}) we find that $F^L$
 is quasi-compact, as it is weakly stable. The assumption that the cardinality of
 $F$ exceeds that of $L$ is needed in this theorem: In Example \ref{xmp:notquasicompact}
 we show that, \emph{when $F$ is the field of algebraic numbers, the affine space
 $F^\setN$ is not quasi-compact}.

 Note that $F^L$ can be regarded as the inverse limit of the $F^I$, where $I$ ranges
 over the finite subsets of $L$, and the limit topology of the $F^I$ coincides with the
 Zariski topology of $F^L$. From this point of view, Theorem~\ref{thm:affineresult} is a
 statement on inverse limits of finite-dimensional affine spaces $F^n$, in other words,
 the maximal spectra of the polynomial rings $F[t_1,\ldots,t_n]$. If we consider the
 prime spectrum of $F[t_1,\ldots,t_n]$ instead, then we are in our second setting:
 Inverse limits of prime spectra of noetherian commutative rings; also see
 Remark~\ref{rem:closingremarks} for further discussion.

 Our second theorem concerns the corresponding properties for noetherian, affine
 schemes. This is a special case of the situation in EGA (spectra of quasi-coherent
 algebras over a fixed scheme). We give direct elementary proofs in this setting,
 avoiding an appeal to the full EGA formalism.
 
\begin{theorem} \label{thm:specquasicompact}
 Let $(I,\leq)$ be a net, $\big( R_i, \greekph_i^j \big)$ a direct system of noetherian
 commutative rings over $I$ and $R_{\infty}$ the associated direct limit. For $i \leq j
 \in I$ denote $X_i := \spec(R_i)$ and $f_j^i := \spec(\greekph_i^j)$. Denote by $X_{
 \infty}$ the inverse limit of the system $\big( X_i, f_j^i \big)$ of topological spaces.
 Then, for any open subset $U \setin X_{\infty}$, the following statements
 are equivalent:
 \begin{enumerate}
  \item[(a)] $U$ is quasi-compact.
	\item[(b)] $U$ is quasi-stable in the sense of Def.~\ref{def:quasistable}.
	\item[(c)] $U$ is weakly stable in the sense of Def.~\ref{def:weaklystable}.
	\item[(d)] $U$ is retro-compact.
	\item[(e)] $U$ is a cylinder set in the sense of Def.~\ref{def:cylinderset}.
	\item[(f)]
   There is a quasi-finite ideal $\scriptu \ideal R_{\infty}$, see
	 Def.~\ref{def:quasifinite}, such that $U = X_{\infty} \setminus \setV(\scriptu)$.
 \end{enumerate}
\end{theorem}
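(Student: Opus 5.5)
The plan is to reduce everything to the affine scheme $\spec(R_\infty)$ and to its constructible (patch) topology. The first step is the standard identification $X_\infty \cong \spec(R_\infty)$ as topological spaces: primes of a direct limit correspond to compatible families of primes, and the limit topology matches the Zariski topology. Under this identification, writing $\greekp_i \colon X_\infty \to X_i$ for the canonical map, every basic open $D(f)$ with $f \in R_\infty$ is $\greekp_i^{-1}(D(f_i))$ for any preimage $f_i \in R_i$ of $f$. The open $U$ is then $X_\infty \setminus \setV(\scriptu)$ for an ideal $\scriptu \ideal R_\infty$. I shall read a quasi-finite ideal as one that is generated by the image of an ideal of some $R_i$, hence up to radical by finitely many elements. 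I shall take weakly stable to mean $U = \greekp_i^{-1}(U_i)$ for some $i$ and some open $U_i \setin X_i$, and a cylinder set to mean $U = \greekp_i^{-1}(W)$ with $W \setin X_i$ constructible.

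The easy cycle is (a)$\Leftrightarrow$(f)$\Leftrightarrow$(c)$\Rightarrow$(d)$\Rightarrow$(a).
\begin{itemize}
\item (a)$\Leftrightarrow$(f): an open $X_\infty \setminus \setV(\scriptu)$ is quasi-compact iff $\setV(\scriptu) = \setV(f_1,\dots,f_n)$ for finitely many $f_k$. The $f_k$ lift to a common $R_i$, because $I$ is directed.
\item (f)$\Rightarrow$(c): take $U_i := X_i \setminus \setV(f_{1,i},\dots,f_{n,i})$.
\item (c)$\Rightarrow$(a): $R_i$ is noetherian, so $U_i$ is a finite union of $D(g)$'s, and their preimages are basic opens of the quasi-compact space $\spec(R_\infty)$.
\item (c)$\Rightarrow$(d): given $V$ quasi-compact open, it is weakly stable at some index $j$ by (a)$\Rightarrow$(c). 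Pick $k \geq i,j$; then $U \cap V$ is weakly stable at $k$, hence quasi-compact.
\item (d)$\Rightarrow$(a): take $V = X_\infty$.
\end{itemize}

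For the cylinder condition (e), (c)$\Rightarrow$(e) is trivial. For (e)$\Rightarrow$(a) I use the patch topology. The space $X_\infty = \spec(R_\infty)$ is compact Hausdorff in the constructible topology. The preimage $\greekp_i^{-1}(W)$ of a constructible $W$, which is a finite union of locally closed sets in the noetherian space $X_i$, is again constructible, hence patch-clopen and patch-compact. A Zariski cover of $U$ by basic opens $D(f)$ is also a patch-open cover, so it has a finite subcover. Thus $U$ is quasi-compact.

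The step I expect to be the main obstacle is bringing in quasi-stability (b), since Def.~\ref{def:quasistable} is only given later. My working guess is that it has the shape of condition (f) of Theorem~\ref{thm:affineresult}: $U = \greekp_i^{-1}(\greekp_i(U))$ for some $i$, perhaps with the finiteness built in differently. Weakly stable implies this immediately. For the converse I would set $S := \greekp_i(U)$ and show it is open, or at least that $U = \greekp_i^{-1}(S')$ for an open $S' \setin X_i$. The idea is this. The image $\greekp_i(X_\infty)$ is pro-constructible, by Chevalley-type arguments in the patch topology: it is the image of a patch-compact set under a patch-continuous map. Hence $\greekp_i(C)$ and $S$ are patch-closed in that image, where $C := X_\infty \setminus U$. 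I would then use that $\greekp_i$ is closed under specialization in the appropriate sense: it is the map on spectra of an integral-type limit, and going-down may fail, so I would instead exploit that $C = \greekp_i^{-1}(\greekp_i(C))$ is closed and take $Z := \overline{\greekp_i(C)}$. The aim is to check $\greekp_i^{-1}(Z) = C$ by showing that a specialization in $X_i$ of a point of $\greekp_i(C)$, if it lifts into $X_\infty$ at all, lifts to a point of $C$. Then $S' := X_i \setminus Z$ works. If this lifting argument fails, I would fall back on the patch-compactness route: show directly that $U$ is patch-closed, which gives (a), and then obtain (c) via the first paragraph.
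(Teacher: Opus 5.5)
\textbf{The gap is condition (b).} You guessed that quasi-stability means $U=\pi_i^{-1}(\pi_i(U))$ for some $i$, that is, the analogue of (f) in Theorem~\ref{thm:affineresult}. Definition~\ref{def:quasistable} is instead the analogue of (e) there. Whenever $U=\bigcup_{\lambda\in\Lambda}W_\lambda$ with $W_\lambda=\pi_{i(\lambda)}^{-1}(A_\lambda)$ and $A_\lambda\subseteq X_{i(\lambda)}$ open, there must be some $k\in I$ with $U=\bigcup_{\lambda\in\Lambda[k]}W_\lambda$. Here $\Lambda[k]$ consists of those $\lambda$ with $W_\lambda=\pi_k^{-1}(B_\lambda)$ for some open $B_\lambda\subseteq X_k$.

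With the correct definition, (b) enters your cycle with no work, and this is how the paper proceeds. For (a)$\Rightarrow$(b), take a finite subcover $W_{\lambda_1},\dots,W_{\lambda_r}$ and some $k\geq i(\lambda_1),\dots,i(\lambda_r)$. Since $\pi_{i(\lambda_s)}=f_k^{i(\lambda_s)}\circ\pi_k$, you get $W_{\lambda_s}=\pi_k^{-1}\bigl((f_k^{i(\lambda_s)})^{-1}(A_{\lambda_s})\bigr)$, so every $\lambda_s$ lies in $\Lambda[k]$. For (b)$\Rightarrow$(c), apply (b) to the cover of $U$ by all open base sets contained in $U$. This gives $U=\pi_k^{-1}\bigl(\bigcup_{\lambda\in\Lambda[k]}B_\lambda\bigr)$, which is weakly stable.

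Your guessed condition is not just another reading of the definition: it is strictly weaker than (a), even when every $R_i$ is noetherian. So the converse you planned cannot succeed, whether via the closure $Z$ of $\pi_i(C)$ and lifting of specializations, or via the fallback of showing $U$ is patch-closed. For a counterexample, choose distinct $a_1,a_2,\ldots$ in an infinite field $k$. Put $R_0=k[x]$ and $R_n=k[x]\bigl[((x-a_1)\cdots(x-a_n))^{-1}\bigr]\times k^n$, with maps $f\mapsto(f,f(a_1),\dots,f(a_n))$ and $(g,c)\mapsto(g,c,g(a_{n+1}))$. The first factor of $X_n$ is $X_0$ with the points $(x-a_1),\dots,(x-a_n)$ removed, and the $n$ factors $\mathrm{Spec}(k)$ map onto exactly these points. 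Hence every $X_n\to X_0$ is bijective, so $\pi_0$ is bijective and every $U$ satisfies $U=\pi_0^{-1}(\pi_0(U))$. On the other hand, the point over $(x-a_j)$ is isolated in $X_j$, hence also in $X_\infty$. The set of all these points is therefore an infinite discrete open subset of $X_\infty$, which is not quasi-compact. This is a noetherian variant of Example~\ref{xmp:notequivalent}.

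The rest of your proposal is correct and essentially matches the paper: (a)$\Leftrightarrow$(f), (c)$\Rightarrow$(a), (c)$\Leftrightarrow$(d) and (c)$\Rightarrow$(e). For (e)$\Rightarrow$(a) you use patch-compactness of $\mathrm{Spec}(R_\infty)$. The paper argues more elementarily: each $\pi_i^{-1}(L_m)$ is closed in the quasi-compact open set $\pi_i^{-1}(U_m)$. Both arguments work.
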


 The two main results are proven in Sections~\ref{sec:proofs1} and~\ref{sec:proofs2},
 respectively. Moreover, in the latter, we present Corollary~\ref{cor:quasicompact},
 extending Corollaire 8.2.11 of~\cite{EGA4.3}. Continuing in this vein, Corollary
 \ref{cor:constructiblecompact} provides: If a cylinder set is covered by an arbitrary
 family of cylinder sets, then it is already the union of finitely many of them.
 Note that this generalizes~\cite[Lemma~2.4]{DeLo}. In the final Section
 \ref{sec:counterexamples}, we present noteworthy examples that contrast our two
 settings of inverse limits with each other.

\section{Proof of Theorem~\ref{thm:affineresult}}
\label{sec:proofs1}

 For any commutative ring $R$ and any set $I$ we abbreviate the polynomial ring $R[I]
 := F[t_i \mid i \in I]$ in the variables $t_i$ over $R$. We need some preparations:

 \begin{proposition} \label{pro:polyradical}
  Let $R$ be a commutative ring, $\scripta \ideal R$ and $S := R[I]$ for some set
	$I$. Then we get the identity $\sqrt{\scripta} \, S = \sqrt{\scripta S}$ of ideals
	in $S$.
 \end{proposition}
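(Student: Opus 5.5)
We prove the two inclusions separately. The inclusion $\sqrt{\scripta}\, S \setin \sqrt{\scripta S}$ is immediate. The ideal $\sqrt{\scripta S}$ is an ideal of $S$. Any generator $r \in \sqrt{\scripta}$ of the left-hand side satisfies $r^n \in \scripta \setin \scripta S$ for some $n$, so $r \in \sqrt{\scripta S}$. Hence the ideal of $S$ generated by $\sqrt{\scripta}$ lies in $\sqrt{\scripta S}$.

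For the reverse inclusion we pass to the quotient ring $\bar R := R/\sqrt{\scripta}$, which is reduced. There is the canonical isomorphism
\[ S/\sqrt{\scripta}\,S \ \cong \ \bar R[I], \]
obtained by reducing coefficients, since $\sqrt{\scripta}\,S$ consists exactly of the polynomials all of whose coefficients lie in $\sqrt{\scripta}$. It therefore suffices to show that $\sqrt{\scripta}\,S$ is a radical ideal. Indeed, if it is radical, then $\scripta S \setin \sqrt{\scripta}\,S$ gives $\sqrt{\scripta S} \setin \sqrt{\sqrt{\scripta}\,S} = \sqrt{\scripta}\,S$. By the isomorphism above, being radical means that $\bar R[I]$ is reduced.

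The key step, which is the only substantive point, is that a polynomial ring over a reduced ring is reduced, for an arbitrary set $I$ of variables. Any single polynomial $f$ involves only finitely many variables, so a nilpotent $f$ already lives in $\bar R[t_{i_1},\ldots,t_{i_n}]$, and $f^m=0$ holds there. This reduces the claim to finitely many variables, and by induction to one variable. In that case we use the leading coefficient. Suppose $f = a_d t^d + \dots + a_0 \in A[t]$ with $A$ reduced, $a_d \noneq 0$, and $f^m = 0$. The coefficient of $t^{dm}$ in $f^m$ is $a_d^m$, so $a_d^m = 0$. Then $a_d = 0$ because $A$ is reduced, which is a contradiction. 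Hence $f = 0$.

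An alternative route avoids the quotient. One can use that a radical ideal $\sqrt{\scripta}$ is the intersection of the primes $\scriptp \supseteq \scripta$. Then $\scriptp S$ is prime, since $S/\scriptp S \cong (R/\scriptp)[I]$ is a domain, and $\bigcap_{\scriptp} \scriptp S = \sqrt{\scripta}\,S$ by comparing coefficients. This again shows that $\sqrt{\scripta}\,S$ is radical. We expect no real obstacle beyond the reduction to finitely many variables.
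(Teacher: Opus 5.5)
Your proof is correct and matches the paper's argument: pass to the reduced ring $R/\sqrt{\scripta}$, use $S/\sqrt{\scripta}\,S \cong (R/\sqrt{\scripta})[I]$ to see that $\sqrt{\scripta}\,S$ is radical, and conclude from $\scripta S \setin \sqrt{\scripta}\,S$. You go further than the paper in one respect: you prove that a polynomial ring in arbitrarily many variables over a reduced ring is reduced (reduction to finitely many variables, then the leading-coefficient argument), whereas the paper simply asserts this.
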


 \begin{proof}[Proof of~\ref{pro:polyradical}]
  The inclusion $\sqrt{\scripta} \, S \setin \sqrt{\scripta S}$ is true for any ring
	extension $R \setin S$. For the converse inclusion let $\scriptb := \sqrt{\scripta}$
	and consider the isomorphism of rings
	\[ \Modulo{S}{\scriptb S} \ = \ \Modulo{R[I]}{\scriptb R[I]} \ \cong \
	   \left( \Modulo{R}{\scriptb} \right)[I] \]
	As $\scriptb$ is a radical ideal $R/\scriptb$ is reduced. Hence its polynomial
	ring is reduced, too. Using the isomorphism this implies $\scriptb S$ to be a
	radical ideal again. As $\scripta S \setin \scriptb S$ we conclude $\sqrt{\scripta
	S} \setin \sqrt{\scriptb S} = \scriptb S = \sqrt{\scripta} S$.
 \end{proof}

 For the proof Theorem \ref{thm:affineresult} we need to study ideals in the polynomial
 ring $F[L]$ which is the direct limit $F[L] = \dirlim F[I]$ of polynomial rings in
 finitely many variables $I \in \scriptI := \set{I \setin L \mid I \mbox{ finite}}$.
 The second ingredient is an algebraic version of the compactness property:

\goodbreak
\begin{definition} \label{def:quasifinite}
 Let $(I,\leq)$ be a net and $\big( R_i, \greekph_i^j \big)$ a direct system of
 commutative rings over $I$. We abbreviate its direct limit by $R_{\infty}$. An
 ideal $\scriptu \ideal R_{\infty}$ is said to be {\bf quasi-finite of level}
 $i \in I$, if it is of the form $\scriptu = \scripta R_{\infty}$ for some ideal
 $\scripta \ideal R_i$. And $\scriptu$ is said to be {\bf quasi-finite}, if it
 is quasi-finite of some level $i \in I$.
\end{definition}

\begin{definition} \label{def:uinfty}
 We continue with the setting of \ref{def:quasifinite} and regard an arbitrary set
 $\greekL \noneq \emptyset$ and for any $\greekl \in \greekL$ some quasi-finite
 ideal $\scriptu_{\greekl} \ideal R_{\infty}$. For any $i \in I$ let $\greekL[i]$
 be the set of all indices $\greekl$ of ideals $\scriptu_{\greekl}$ that are quasi-finite
 of level $i$, that is
 \[ \greekL[i] \ := \ \set{\greekl \in \greekL \mid \E \scriptb_i \ideal R_i
    \, : \, \scriptu_{\greekl} = \scriptb_i R_{\infty}}. \]
 Then we will denote the following ideals (for any $i \in I$) of the direct limit
 $R_{\infty}$
\[ \scriptu[i] \ := \ \sum_{\greekl \in \greekL[i]} \, \scriptu_{\greekl}, \hspace{1.5cm}
   \scriptu[\infty] \ := \ \sum_{\greekl \in \greekL}    \, \scriptu_{\greekl}. \]
\end{definition}

\begin{proposition} \label{pro:limcompact}
 Let $(I,\leq)$ be a net, $\big( R_i, \greekph_i^j \big)$ be a direct system of
 commutative rings over $I$ and $R_{\infty}$ its direct limit. Let $\scriptu_{\greekl}
 \ideal R_{\infty}$ be a family (where $\greekl \in \greekL$) of quasi-finite ideals.
 Then we get the following properties for the ideals defined above:
 \begin{enumerate}
  \item[(i)]
	 For any $i \leq j \in I$ we have $\greekL[i] \setin \greekL[j]$ and $\scriptu[i]
	 \setin \scriptu[j]$.
  \item[(ii)]
	 $\scriptu[i]$ is quasi-finite of level $i$ and $\scriptu[\infty]$ is the union
	 of the $\scriptu[i]$.
	\item[(iii)]
	 For any ideal $\scriptu \ideal R_{\infty}$ we have the equivalence of (a) $\scriptu$
	 is finitely generated and (b) there are some $k \in I$ and some finitely generated
	 ideal $\scriptb \ideal R_k$ such that $\scriptu = \scriptb R_{\infty}$. In
	 particular finitely generated implies quasi-finite.
	\item[(iv)]
	 If $\scriptu[\infty]$ is finitely generated, then $\scriptu[\infty] = \scriptu[m]$
	 for some $m \in I$.
  \item[(v)]
	 If $\scriptv \ideal R_{\infty}$ is some finitely generated ideal, then we obtain
	 the implication
   \[ \sqrt{\scriptu[\infty]} \, = \, \sqrt{\scriptv} \ \implies \
      \E m \in I \ \colon \ \sqrt{\scriptu[\infty]} \, = \, \sqrt{\scriptu[m]}. \]
 \end{enumerate}
\end{proposition}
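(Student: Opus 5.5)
We treat the five items in order. Throughout, we use the canonical maps $\greekph_i \colon R_i \to R_{\infty}$ and the basic fact that every element of $R_{\infty}$ is the image of some element of some $R_i$. For $i \leq j$ we have $\greekph_i = \greekph_j \circ \greekph_i^j$, hence $\scriptb R_{\infty} = \greekph_i^j(\scriptb) R_j \cdot R_{\infty}$ for any $\scriptb \ideal R_i$.

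\emph{Item (i).} If $\greekl \in \greekL[i]$, say $\scriptu_{\greekl} = \scriptb_i R_{\infty}$, then the ideal $\scriptb_j := \greekph_i^j(\scriptb_i) R_j$ satisfies $\scriptu_{\greekl} = \scriptb_j R_{\infty}$. So $\greekL[i] \setin \greekL[j]$, and the inclusion of the sums follows.

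\emph{Item (ii).} For each $\greekl \in \greekL[i]$ choose $\scriptb_{\greekl} \ideal R_i$ with $\scriptu_{\greekl} = \scriptb_{\greekl} R_{\infty}$. Put $\scripta := \sum_{\greekl \in \greekL[i]} \scriptb_{\greekl} \ideal R_i$. Since extension of ideals commutes with sums, $\scriptu[i] = \scripta R_{\infty}$, so $\scriptu[i]$ is quasi-finite of level $i$. (If $\greekL[i]$ is empty, $\scripta = 0$.) Every $\greekl \in \greekL$ lies in some $\greekL[i]$, because $\scriptu_{\greekl}$ is quasi-finite. A finite sum of elements of $\scriptu_{\greekl_1},\ldots,\scriptu_{\greekl_r}$ with $\greekl_s \in \greekL[i_s]$ lies in $\scriptu[j]$ for any upper bound $j$ of the $i_s$, by (i). Thus $\scriptu[\infty] = \bigcup_i \scriptu[i]$, and this union is directed.

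\emph{Item (iii).} Suppose $\scriptu = (x_1,\ldots,x_r)$. Lift each $x_s$ to some $R_{i_s}$ and push all lifts to a common upper bound $k$. The ideal $\scriptb \ideal R_k$ generated by the lifts is finitely generated, and its extension is $\scriptu$. Conversely, if $\scriptb = (b_1,\ldots,b_r)$, then $\scriptb R_{\infty}$ is generated by the images $\greekph_k(b_s)$.

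\emph{Item (iv).} This is the standard argument that a finitely generated ideal equal to a directed union of ideals equals one member. By (ii), $\scriptu[\infty]$ is the directed union of the $\scriptu[i]$. Each of its finitely many generators lies in some $\scriptu[i_s]$. Take $m \geq$ all $i_s$; then (i) gives $\scriptu[\infty] \setin \scriptu[m] \setin \scriptu[\infty]$.

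\emph{Item (v).} This is the only step requiring care, since $\scriptu[\infty]$ itself need not be finitely generated. Let $\scriptv = (v_1,\ldots,v_r)$. From $\scriptv \setin \sqrt{\scriptu[\infty]}$ we get exponents $n_s$ with $v_s^{n_s} \in \scriptu[\infty] = \bigcup_i \scriptu[i]$. Directedness gives a single $m$ with all $v_s^{n_s} \in \scriptu[m]$. Hence $\scriptv \setin \sqrt{\scriptu[m]}$, and so
\[ \sqrt{\scriptu[\infty]} \ = \ \sqrt{\scriptv} \ \setin \ \sqrt{\scriptu[m]} \ \setin \ \sqrt{\scriptu[\infty]}, \]
using $\scriptu[m] \setin \scriptu[\infty]$ for the last inclusion. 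This proves the equality $\sqrt{\scriptu[\infty]} = \sqrt{\scriptu[m]}$.

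The main obstacle is this passage to radicals. It is resolved by the observation that finitely many powers of generators land at one finite level, so Proposition~\ref{pro:polyradical} is not even needed here.
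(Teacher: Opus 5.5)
Your proof is correct and follows the paper's argument item by item. Items (i)--(iii) use the same lifting to a common upper bound, and items (iv) and (v) use the same idea that finitely many generators, or powers of generators, land in a single $\scriptu[m]$. The only difference is cosmetic: in (iv) and (v) the paper first invokes (iii) to write the generators as classes $[k,b_r]$ before locating them in some $\scriptu[i(r)]$. You use an arbitrary finite generating set directly, which shows that this detour is unnecessary.
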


\begin{proof}[Proof of~\ref{pro:limcompact}]
 For (i) consider any $\greekl \in \greekL[i]$, this means $\scriptu_{\greekl} =
 \scripta_{\greekl} R_{\infty}$ for some $\scripta_{\greekl} \ideal R_i$. As $i \leq j$
 we may take $\scriptb_{\greekl} := \scripta_{\greekl} R_j$, then $\scriptb_{\greekl}
 R_{\infty} = \scripta_{\greekl} R_j R_{\infty} = \scripta_{\greekl} R_{\infty} =
 \scriptu_{\greekl}$. In particular $\greekl \in \greekL[j]$ and thereby $\greekL[i]
 \setin \greekL[j]$. Property (ii) follows from the fact that the extension of ideals
 commutes with taking sums of ideals: We find $\scriptu[i]$ to be quasi-finite of
 level $i$ from
 \[ \scriptu[i] \ = \ \sum_{\greekl \in \greekL[i]} \scriptu_{\greekl} \ = \
    \sum_{\greekl \in \greekL[i]} \scripta_{\greekl} R_{\infty}  \ = \ \left(
		\sum_{\greekl \in \greekL[i]} \scripta_{\greekl} \right) R_{\infty}. \]
 Next we prove, that $\scriptu[\infty]$ is the union of the $\scriptu[i]$: For any
 $i \in I$ we have $\scriptu[i] \setin \scriptu[\infty]$ and thereby the union of the
 $\scriptu[i]$ is contained in $\scriptu[\infty]$. For the converse inclusion regard
 $f \in \scriptu[\infty]$, say $f = f_1 + \dots + f_s$. For any $r \in 1,\ldots,s$
 there is some $\greekl(r) \in \greekL$, such that $f_r \in \scriptu_{\greekl(r)}$.
 By assumption $\scriptu_{\greekl(r)}$ is quasi-finite of some level $i(r) \in I$.
 Choose $k \in I$ with $k \geq i(r)$ for all $r \in 1,\ldots,s$, then $f_r \in
 \scriptu_{\greekl(r)} \setin \scriptu[i(r)] \setin \scriptu[k]$. As $\scriptu[k]$
 is an ideal, we get $f = f_1 + \dots + f_r \in \scriptu[k]$.

 Let us prove the equivalence in (iii): If $\scriptu = \scriptb R_{\infty}$ for some
 finitely generated ideal $\scriptb = \gen{b_1,\ldots,b_s} \ideal R_k$, then
 $\scriptu$ is generated by $[k,b_r] \in R_{\infty}$ where $r \in 1,\ldots,s$ and
 hence is finitely generated, again. Conversely, if $\scriptu = \gen{u_1,\ldots,u_s}$
 for some $u_r \in R_{\infty}$, then any $u_r$ is of the form $u_r =[i(r),a_r]$,
 where $i(r) \in I$ and $a_r \in R_{i(r)}$. Choose $k \in I$ with $k \geq i(r)$ for
 all $r \in 1,\ldots,s$ and let $b_r := \greekph_{i(r)}^k(a_r) \in R_k$. Then $u_r
 = [i(r),a_r] = [k,b_k]$ and hence $\scriptu \setin \scriptb R_{\infty}$ for $\scriptb
 := \gen{b_1,\ldots,b_s}$. On the other hand $[k,b_r] = u_r \in \scriptu$ is clear
 and hence $\scriptb R_{\infty} \setin \scriptu$, as well.

 In the proof of (iv) we can use (iii) to find some $k \in I$ and a finitely generated
 ideal $\scriptb \ideal R_k$ such that $\scriptu[\infty] = \scriptb R_{\infty}$, say
 $\scriptb = \gen{b_1,\ldots,b_s}$ for some $b_r \in R_k$. Then $\scriptu[\infty] =
 \scriptb R_{\infty}$ is generated by the elements $\widehat{b}_r := [k,b_r]$. By
 (ii) $\scriptu[\infty]$ is the union of the $\scriptu[i]$, that is there are some
 $i(r) \in I$ such that $\widehat{b}_r \in \scriptu[i(r)]$. Choose $m \in I$ with
 $m \geq i(1),\ldots,i(s)$. Then $\widehat{b}_r \in \scriptu[i(r)] \setin \scriptu[m]$
 such that we find the chain
	 \[ \scriptu[\infty] \ = \ \gen{\widehat{b}_1,\ldots,\widehat{b}_s} \ \setin \
	    \scriptu[m] \ \setin \ \scriptu[\infty]. \]
 The proof of (v) proceeds similarly: As $\scriptv \ideal R_{\infty}$ is finitely
 generated we find another ideal $\scriptb = \gen{b_1,\ldots,b_s} \ideal R_k$ such
 that $\scriptv = \scriptb R_{\infty}$. Then $\scriptv = \scriptb R_{\infty}$ is
 generated by $\scriptb R_{\infty} = \gen{\widehat{b}_1,\ldots,\widehat{b}_s}$ again,
 where $\widehat{b}_r := [k,b_r]$. By assumption we have $\sqrt{\scriptu[\infty]} =
 \sqrt{\scriptv}$ such that $\widehat{b}_r \in \scriptb R_{\infty}$ implies $\widehat{
 b}_r \in \sqrt{\scriptb R_{\infty}} = \sqrt{\scriptv} = \sqrt{\scriptu[\infty]}$.
 From this we get
 \[ \A r \in 1,\ldots,s \ \E p(r) \in \setN \ \ \SuchThat \ \
	  \widehat{b}_r^{p(r)} \, \in \ \scriptu[\infty]. \]
 As $\scriptu[\infty]$ is the union of the $\scriptu[i]$ there are some $i(r) \in I$
 such that $\widehat{b}_r^{p(r)} \in \scriptu[i(r)]$. Choose $m \in I$ with $m \geq
 i(1),\ldots,i(s)$. Then $\widehat{b}_r^{p(r)} \in \scriptu[i(r)] \setin \scriptu[m]$
 by (ii), which is $\widehat{b}_r \in \sqrt{\scriptu[m]}$. As these elements generate
 $\scriptv$, we get $\scriptv \setin \sqrt{\scriptu[m]}$ and hence
 \[ \sqrt{\scriptu[\infty]} \ = \ \sqrt{\scriptv} \ \setin \ \sqrt{\scriptu[m]}
	  \ \setin \ \sqrt{\scriptu[\infty]}. \]
\end{proof}

 These are the algebraic ingredients for the proof of \ref{thm:affineresult}. We
 will also use property (i) in~\cite[Thm.~1.1]{Zeid1}, stating $\setI \setV(\scriptu)
 = \sqrt{\scriptu}$ for any ideal $\scriptu \ideal F[L]$. It requires $\abs{L} <
 \abs{F}$. Claim 6 in that paper also contains the identities $\setV \big( \scripta
 F[L] \big) = \greekp_I^{-1} \big( \setV(\scripta) \big)$ and $\setI \big( \greekp_I^{
-1}(X) \big) = \setI(X) F[L]$ for any $I \setin L$ and any ideal $\scripta \ideal
 F[I]$ resp.~subset $X \setin F^I$.

\begin{proof}[Proof of Theorem~\ref{thm:affineresult}]
 {\bf Part 1}: (f) to (i) are equivalent. The implication (h)$\implies$(f) is generally
 true: For any function $f \colon X \to Y$ and any subset $B \setin Y$ we have $f^{-1}
 \big( f \big( f^{-1}(B) \big) \big) = f^{-1}(B)$. The implication (f)$\implies$(g) also is
 generally true, just go to complements. In the next step (g)$\implies$(i) we use the
 above identities: We have $C = \greekp_J^{-1} \big( \greekp_J(C) \big)$, let $\scriptb
 := \setI \big( \greekp_J(C) \big) \ideal F[J]$, then we get
 \[ \scriptu \ = \ \setI(C) \ = \ \setI \Big( \greekp_J^{-1} \big( \greekp_J(C)
	  \big) \Big) \ = \ \setI \big( \greekp_J(C) \big) F[L] \ = \ \scriptb \, F[L]. \]
 Take $K := J$ and $C_K := \setV(\scriptb)$. Then $C_K \setin F^K$ is closed and
 $C = \setV \setI(C) = \setV(\scriptb F[L]) = \greekp_K^{-1}(C_K)$. It remains to
 prove (i)$\implies$(h): As $C_K \setin F^K$ is closed, $U_K := F^K \setminus C_K$
 is open and it is straightforward to see, that $U$ is the preimage of this set:
 \[ U \ = \ F^L \setminus C \ = \ F^L \setminus \greekp_K^{-1} \big( C_K \big) \ = \
    \greekp_K^{-1} \Big( F^K \setminus C_K \Big) \ = \ \greekp_K^{-1} \Big( U_K \Big). \]
 {\bf Part 2}: (b), (e) and (h) are equivalent. The implications (b)$\implies$(e)
 and (e)$\implies$(h) are true in any inverse limit of topological spaces: Just
 choose a cover of $U$ by open base sets - these are of the form $\greekp_I^{-1}(
 A)$ for some $I \in \scriptI$ and $A \setin F^I$ open. As $U$ is quasi-compact
 finitely many of these suffice. And a finite union of open base sets is an open
 base set again. The major step here is (h)$\implies$(b): We need to show that $U
 \setin F^L = \invlim F^I$ is quasi-compact, so let $W_{\greekl}$ (where $\greekl
 \in \greekL$) be an open cover of~$U$. We may assume the $W_{\greekl}$ to be open
 open base sets, i.e.~there are open sets $A_{\greekl} \setin F^{I(\greekl)}$
 such that $W_{\greekl} = \greekp_{I(\greekl)}^{-1}(A_{\greekl})$. As $A_{\greekl}
 \setin F^{I(\greekl)}$ is an open set in the Zariski topology,
 it is the complement of some algebraic set $\setV(\scripta_{\greekl})$ where
 $\scripta_{\greekl} \ideal F[I(\greekl)]$ is an ideal in some polynomial ring.
 Denote $\scriptu_{\greekl} := \scripta_{\greekl} F[L]$, then $U$ is the complement
 of an algebraic set, as well:
 \begin{eqnarray*}
  U & = & \bigcup_{\greekl \in \greekL} W_{\greekl}
    \ =\  \bigcup_{\greekl \in \greekL} \greekp_{I(\greekl)}^{-1}
		      \Big( \C \, \setV(\scripta_{\greekl}) \Big)
    \ = \ \C \, \bigcap_{\greekl \in \greekL} \greekp_{I(\greekl)}^{-1}
		      \Big( \setV(\scripta_{\greekl}) \Big)                                        \\
    & = & \C \, \bigcap_{\greekl \in \greekL} \setV \big( \scripta_{\greekl} F[L] \big)
    \ = \ \C \, \setV \left( \sum_{\greekl \in \greekL} \scriptu_{\greekl} \right)
    \ = \ \C \, \setV \big( \scriptu[\infty] \big).
 \end{eqnarray*}
 But $U$ was assumed to be weakly stable, which means $U = \greekp_J^{-1}(B)$ for
 some open subset $B \setin F^J$. Let analogously $B = \C \, \setV(\scriptb)$, where
 $\scriptb$ is an ideal in the polynomial ring $\scriptb \ideal F[J]$. Then similarly
 \[ U \ = \ \greekp_j^{-1} \Big( \C \, \setV(\scriptb) \Big) \ = \ \C \,
    \greekp_j^{-1} \Big( \setV(\scriptb) \Big) \ = \ \C \, \setV \Big( \scriptb F[L] \Big). \]
 Comparing these representations for $U$ we find $\setV(\scriptu[\infty]) = \setV(
 \scriptb F[L])$. Both are algebraic sets in $F^L$ and $F$ is an algebraically closed
 field with $\abs{L} < \abs{F}$. Using~\cite[Thm.~1.1]{Zeid1}, we may use the strong
 Nullstellensatz to find
 \[ \sqrt{\scriptu[\infty]} \ = \ \setI \, \setV (\scriptu[\infty]) \ = \
    \setI \, \setV(\scriptb F[L]) \ = \ \sqrt{\scriptb F[L]}. \]
 Note that any $\scriptu_{\greekl}$ is quasi-finite, by construction. As $J$ is
 finite $F[J]$ is noetherian. Hence $\scriptb \ideal F[J]$ and thereby $\scriptb
 F[L]$ are finitely generated. Now we may apply \ref{pro:limcompact}.(v) to get
 some $K \in \scriptI$ such that $\sqrt{\scriptu[\infty]} = \sqrt{\scriptu[K]}$.
 Resubstituting this, we get:
 \begin{eqnarray*}
  U & = & \C \, \setV(\scriptu[\infty])
	  \ = \ \C \, \setV \left( \sqrt{\scriptu[\infty]} \right)
    \ = \ \C \, \setV \left( \sqrt{\scriptu[K]} \right)
		\ = \ \C \, \setV \Big( \scriptu[K] \Big)                                          \\
    & = & \C \, \setV \left( \sum_{\greekl \in \greekL[K]}
		      \scriptu_{\greekl} \right)
    \ = \ \C \bigcap_{\greekl \in \greekL[K]} \setV \Big( \scripta_{\greekl}
		      F[L] \Big)
		\ = \ \C \bigcap_{\greekl \in \greekL[K]} \greekp_{I(\greekl)}^{-1} \Big(
		      \setV \big( \scripta_{\greekl} \big) \Big) 			                             \\
		& = & \bigcup_{\greekl \in \greekL[K]} \greekp_{I(\greekl)}^{-1} \Big( \C \,
		      \setV(\scripta_{\greekl}) \Big)
		\ = \ \bigcup_{\greekl \in \greekL[K]} \greekp_{I(\greekl)}^{-1} \Big(
		      A_{\greekl} \Big)
		\ = \ \bigcup_{\greekl \in \greekL[K]} W_{\greekl}.
 \end{eqnarray*}
 Take a look at $\scripta_{\greekl} \ideal F[I(\greekl)]$ again: As $\greekl \in
 \greekL[K]$ there is some ideal $\scriptb_{\greekl} \ideal F[K]$ such that $\scriptb_{
 \greekl} F[L] = \scriptu_{\greekl} = \scripta_{\greekl} F[L]$. In the language of
 algebraic sets this translates into
 \begin{eqnarray*}
  W_{\greekl}
	& = & \greekp_{I(\greekl)}^{-1} \Big( A_{\greekl} \Big)
	\ = \ \greekp_{I(\greekl)}^{-1} \Big( \C \, \setV(\scripta_{\greekl}) \Big)
	\ = \ \C \, \greekp_{I(\greekl)}^{-1} \Big( \setV(\scripta_{\greekl}) \Big)          \\
	& = & \C \, \setV \Big( \scripta_{\greekl} F[L] \Big)
  \ = \ \C \, \setV \Big( \scriptb_{\greekl} F[L] \Big)
	\ = \ \C \, \greekp_{K}^{-1} \Big( \setV(\scriptb_{\greekl}) \Big)                   \\
	& = & \greekp_{K}^{-1} \Big( \C \, \setV(\scriptb_{\greekl}) \Big)
  \ = \ \greekp_{K}^{-1} \Big( B_{\greekl} \Big).
 \end{eqnarray*}
 Where $B_{\greekl} := \C \, \setV(\scriptb_{\greekl}) \setin F^K$ are open subsets.
 By now we already proved, that $U$ is quasi-stable, we proceed from here:
 \[ U \  = \ \bigcup_{\greekl \in \greekL[K]} W_{\greekl} \ = \ \greekp_K^{-1} \left(
    \bigcup_{\greekl \in \greekL[K]} B_{\greekl} \right). \]
 As $K$ is finite, as well, $F^K$ is a noetherian topological space and this means,
 that any open subset of $F^K$ is quasi-compact. Therefore the union of the $B_{
 \greekl}$ where $\greekl \in \greekL[K]$ can already be established by a finite
 set $\greekO \setin \greekL[K]$ and thereby
 \[ U \ = \ \greekp_K^{-1} \left( \bigcup_{\greekl \in \greekL[K]} B_{\greekl}
    \right) \ = \ \greekp_K^{-1} \left( \bigcup_{\greekl \in \greekO} B_{\greekl}
		\right) \ = \ \bigcup_{\greekl \in \greekO} W_{\greekl}. \]  
 {\bf Part 3}: (d) and (h) are equivalent. Quasi-compact and weakly stable are
 equivalent, due to part 2. In particular $F^L$ is quasi-compact and thereby
 (d)$\implies$(b) is clear by taking $V = F^L$. In (h)$\implies$(d) $U$ is weakly
 stable and we are given an arbitrary open, quasi-compact set $V \setin F^L$.
 This means $V$ is weakly-stable again. But the intersection $U \cap V$ remains
 weakly stable and hence quasi-compact.

 {\bf Part 4}: (a) and (b) are equivalent. Let us first assume (a): $U = F^L \setminus
 \setV(\scriptu)$, where $\scriptu \ideal F[L]$ is finitely generated, say $\scriptu
 = \gen{f_1,\ldots,f_n}$. As any polynomial $f_i \in F[L]$ has finitely many variables
 only, we have $f_i \in F[\greekO_i]$ for some $\greekO_i \in \scriptI$. Let $\greekO$
 be the union of $\greekO_1$ to $\greekO_n$. Then $\greekO \in \scriptI$ again and
 $f_i \in F[\greekO]$ for any $i \in 1,\ldots,n$ such that $\scripta := f_1
 F[\greekO] + \dots + f_n F[\greekO]$ is an ideal of $F[\greekO]$. As $\scriptu =
 f_1 F[L] + \dots + f_n F[L]$ it is clear, that $\scriptu = \scripta F[L]$. From
 this we find that $\setV(\scriptu)$ is the preimage
 \[ \setV \big( \scriptu \big) \ = \ \setV \big( \scripta F[L] \big) \ = \
    \greekp_{\greekO}^{-1} \big( \setV(\scripta) \big). \]
 In particular we find that $U$ is weakly stable (and hence quasi-compact, due to
 part 2), as it is of the form
 \[ U \ = \ \C \, \setV \big( \scriptu \big) \ = \ \C \, \greekp_{\greekO}^{-1} \big(
    \setV(\scripta) \big) \ = \ \greekp_{\greekO}^{-1} \Big( \C \, \setV(\scripta) \Big). \]
 Conversely we now start in (b): As $U \setin F^L$ is open, it is of the form $U =
 F^L \setminus \setV(\scriptu)$ for some ideal $\scriptu \ideal F[L]$ with $\sqrt{
 \scriptu} = \scriptu$. In particular $U = F^L \setminus \setV(\scriptu)$ is covered
 by the principal open subsets $B(f) = \set{x \in F^L \mid f(x) \noneq 0}$ of $F^L$,
 where $f$ runs in $\scriptu$. But as $U$ is quasi-compact, by assumption, there has
 to be a finite subset $\greekO \setin \scriptu$ such that $U$ is covered by the
 principal open sets $B(f)$ of $f \in \greekO$ only:
 \[ U \ = \ \bigcup_{f \in \greekO} \set{x \in F^L \mid f(x) \noneq 0}. \]
 Going to complements again, we see that $\setV(\scriptu)$ is the intersection of
 finitely many closed sets $\setV(f) \setin F^L$
 \[ \setV(\scriptu) \ = \ F^L \setminus U \ = \ \bigcap_{f \in \greekO}
    \set{x \in F^L \mid f(x) = 0}. \]
 As $\greekO$ is finite and any $f \in \greekO$ only has finitely many variables,
 there is some finite subset $K \setin L$, such that $f \in F[K]$ for any $f \in
 \greekO$. Let $\scripta := \gen{f \mid f \in \greekO} \ideal F[K]$ be the ideal
 generated in $F[K]$. Then we find
 \[ \setV(\scriptu) \ = \ \bigcap_{f \in \greekO} \setV(f) \ = \ \setV \left(
    \sum_{f \in \greekO} f F[L] \right) \ = \ \setV \big( \scripta F[L] \big). \]
 The assumptions on $F$ enable us to use~\cite[Theorem 1.1]{Zeid1}, with which
 we compute
 \[ \scriptu \ = \ \setI \setV(\scriptu) \ = \ \setI \setV \big( \scripta F[L]
    \big) \ = \ \sqrt{\scripta F[L]} \ = \ \sqrt{\scripta} \, F[L] \]
 Hereby $\sqrt{\scripta F[L]} = \sqrt{\scripta} \, F[L]$ is
 Proposition \ref{pro:polyradical}. So, as $K$ is finite, $F[K]$ is noetherian and hence $\sqrt{
 \scripta}$ is finitely generated. Thereby the extension $\scriptu = \sqrt{\scripta}
 F[L]$ is finitely generated, as well. {\bf Part 5}: (c) and (h) are equivalent. We
 already have all the equivalences, except that of (c): The implication (h)$\implies$(c)
 is trivial by definition of a cylinder set. But we also get (c)$\implies$(d) from
 the following reasoning: A cylinder set is a finite Boolean combination of weakly
 stable sets. These are retro-compact as (h)$\implies$(d) due to part 3. But
 by~\cite[Cor.~2.3.4]{EGA1} finite Boolean combinations of open, retro-compact
 sets stay retro-compact.
\end{proof}

\section{Proof of Theorem~\ref{thm:specquasicompact}}
\label{sec:proofs2}

 In the following let $(I,\leq)$ be a net and $\big( X_i, f_j^i \big)$ be an inverse
 system of topological spaces over $I$. The inverse limit of this system is $X_{\infty}$.
 We denote the canonical projections by $\greekp_k \colon X_{\infty} \to X_k : \big(
 x_i \big) \mapsto x_k$. Note that $F^L = \invlim F^I$, where $I$ runs over the finite
 subsets of $L$, canonically. In this sense the notions \emph{weakly stable} and
 \emph{cylinder set}, that have been used in \ref{thm:affineresult}, are just a
 special case of the following general definitions.

\begin{definition} \label{def:weaklystable}
 A subset $U \setin X_{\infty}$ is said to be {\bf weakly stable}, if there is some
 $k \in I$ and an open $A \setin X_k$ such that $U = \greekp_k^{-1}(A)$. The collection
 of weakly stable subsets of $X_{\infty}$ will be denoted by $\scriptW$.
\end{definition}

\begin{definition} \label{def:constructible}
 Let $X$ be a topological space, a subset $W \setin X$ is said to be
 {\bf quasi-constructible}, if it is a finite Boolean combination of open subsets
 of $X$. And $W \setin X$ is said to be {\bf constructible}, if it is a finite
 Boolean combination of subsets of $X$, that are both, open and retro-compact.
\end{definition}

 These notions are taken from \cite[Def.~2.3.2]{EGA1}. Note that in a noetherian
 topological space $X$ any quasi-constructible set already is constructible. 

\begin{definition} \label{def:cylinderset}
 A subset $C \setin X_{\infty}$ is said to be a {\bf cylinder set}, if it satisfies
 one of the following equivalent conditions:
 \begin{enumerate}
  \item[(a)]
	 $C$ is a finite, Boolean combination of open, weakly stable subsets (i.e.~$C$ is
	 an arbitrary combination of unions, intersections and complements of finitely many
	 sets, taken from $\scriptW$).
	\item[(b)]
	 There are some $i(m) \in I$ and some locally closed subsets $L_m \setin X_{i(m)}$
	 (where $m \in 1,\ldots,n$) such that $C$ is a finite union of the form
	 \[ C \ = \ \bigcup_{m=1}^n \greekp_{i(m)}^{-1} \big( L_m \big). \]
	\item[(c)]
	 There is a quasi-constructible subset $W \setin X_i$ on some level $i \in I$,
	 such that $C$ is the preimage $C = \greekp_i^{-1}(W)$ of this set.
 \end{enumerate}
\end{definition}

 As $X_{\infty}$ is equipped with the initial topology, $\scriptW$ is a basis of this
 topology. In both our cases open and weakly stable is equivalent to open and
 retro-compact, hence (a) is equivalent to saying $C$ is constructible in $X_{\infty}$.

\begin{definition} \label{def:quasistable}
 An open subset $U \setin X_{\infty}$ is said to be {\bf quasi-stable} if every
 cover by open base sets is derived from a cover of bounded level of stability: Let
 $W_{\greekl} \in \scriptW$ (where $\greekl \in \greekL$) be a family of open base
 sets, that is for any $\greekl \in \greekL$ there are some $i(\greekl) \in I$ and
 $A_{\greekl} \setin X_{i(\greekl)}$ open, such that $W_{\greekl} = \greekp_{i(\greekl
 )}^{-1}(A_{\greekl})$. And for some $i \in I$ let us denote the set of all $\greekl$
 such that $W_{\greekl}$ belongs to a level $i$ of stability by
 \[ \greekL[i] \ := \ \set{\greekl \in \greekL \mid \E B_{\greekl} \setin X_i
    \mbox{ open} \ \colon \ W_{\greekl} = \greekp_{i}^{-1} \big( B_{\greekl} \big)}. \]
 Then, if the $W_{\greekl}$ cover $U$, there already is some $k \in I$ such that
 $U$, is covered by the $W_{\greekl}$ with $\greekl \in \greekL[k]$ only. Formally
 we have the implication
 \[ U \ = \ \bigcup_{\greekl \in \greekL} W_{\greekl} \ \ \implies \ \ 
    \E \, k \in I \ \colon \ U \ = \ \bigcup_{\greekl \in \greekL[k]} W_{\greekl}. \]
\end{definition}

\begin{lemma} \label{lem:spechomeomorph}
 Let $(I,\leq)$ be a net and $\big( R_i, \greekph_i^j \big)$ be a direct system of
 commutative rings over $I$ and $R_{\infty}$ the associated direct limit. For $i
 \leq j \in I$ let $X_i := \spec(R_i)$ and $f_j^i := \spec(\greekph_i^j)$, the
 inverse limit of $\big( X_i, f_j^i \big)$ is denoted by $X_{\infty}$. Then we obtain
 a homeomorphism, by virtue of
 \[ s \ \colon \ \spec \big( R_{\infty} \big) \, \isoto \, X_{\infty} \ : \
    \scriptq \, \mapsto \, \big( \scriptq \cap R_i \big). \]
 If $\greeke_j \colon R_j \to R_{\infty} : a \mapsto [j,a]$ is the structural
 homomorphism of the direct limit $R_{\infty}$, then $\spec(\greeke_i)(\scriptq)
 = \scriptq \cap R_i$ and thereby $s$ makes the following diagram commute:
 \[ \begin{matrix}
		 \scriptq          & \spec \big( R_{\infty} \big)     & \mapatop{s}
		                   & X_{\infty}                       & \big( \scriptp_i \big)     \\
		 \mapsdown         & \mapdown{\rm{Spec}(\greeke_j)}   & 
			 	  			       & \mapdown{\greekp_j}              & \mapsdown                  \\
		 \scriptq \cap R_j & \spec \big( R_j \big)            & =
		  	               & X_j                              & \scriptp_j
 \end{matrix} \]
\end{lemma}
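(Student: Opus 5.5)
The plan is to verify directly that $s$ is well defined, bijective, continuous and open, using only the universal property of the direct limit. Write $\greeke_i \colon R_i \to R_{\infty}$ for the structural maps. For a prime $\scriptq \ideal R_{\infty}$ the contraction $\scriptp_i := \greeke_i^{-1}(\scriptq)$, written $\scriptq \cap R_i$, is prime in $R_i$. Since $\greeke_j \circ \greekph_i^j = \greeke_i$, we get $(\greekph_i^j)^{-1}(\scriptp_j) = \scriptp_i$, that is $f_j^i(\scriptp_j) = \scriptp_i$. So $(\scriptp_i)$ is a compatible family and lies in $X_{\infty}$. The identity $\greekp_j \circ s = \spec(\greeke_j)$ then holds by construction, which gives the commutative diagram.

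For bijectivity I will construct the inverse. Given a compatible family $(\scriptp_i) \in X_{\infty}$, set
\[ \scriptq \ := \ \set{ [i,a] \mid i \in I, \ a \in \scriptp_i } \ \setin \ R_{\infty}. \]
The key check is that membership does not depend on the representative. If $[i,a] = [j,b]$, then $\greekph_i^k(a) = \greekph_j^k(b)$ for some $k \geq i,j$. By compatibility, $a \in \scriptp_i$ if and only if $\greekph_i^k(a) \in \scriptp_k$, and likewise for $b$. Sums and products can be computed on a common level, so $\scriptq$ is an ideal. It is prime because $[i,a][j,b] \in \scriptq$ forces $\greekph_i^k(a)\greekph_j^k(b) \in \scriptp_k$ for a common $k$. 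It is proper because $1 = [i,1] \nonin \scriptq$. By the same independence argument, $\greeke_i^{-1}(\scriptq) = \scriptp_i$. Conversely, every element of $R_{\infty}$ has the form $[i,a]$, so any prime $\scriptq$ is recovered from its contractions. The two maps are therefore mutually inverse.

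For the topology, $s$ is continuous because each $\greekp_j \circ s = \spec(\greeke_j)$ is continuous and $X_{\infty}$ carries the initial topology. For openness it suffices to treat the basic open sets $D(g)$ of $\spec(R_{\infty})$. Write $g = [j,a]$. Then $\scriptq \in D(g)$ if and only if $a \nonin \greeke_j^{-1}(\scriptq)$, so
\[ D(g) \ = \ \spec(\greeke_j)^{-1}\big( D(a) \big), \qquad
   s\big(D(g)\big) \ = \ \greekp_j^{-1}\big( D(a) \big), \]
where the second identity uses bijectivity and $\greekp_j \circ s = \spec(\greeke_j)$. The right-hand side is open in $X_{\infty}$. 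Hence $s$ is open, and so it is a homeomorphism.

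The only step needing real care is the independence of representatives in the definition of $\scriptq$; it is exactly where the compatibility $f_j^i(\scriptp_j) = \scriptp_i$ and the directedness of the net are used. Everything else is formal.
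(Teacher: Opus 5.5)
Your proof is correct and follows the route the paper indicates: the paper builds exactly the same prime $\scriptq = \{[j,p] \mid j \in I,\ p \in \scriptp_j\}$ for surjectivity and otherwise defers to EGA IV, Corollaire 8.2.10. Your openness step via $s\big(B([j,a])\big) = \greekp_j^{-1}\big(B(a)\big)$ is the first identity of Corollary~\ref{cor:speccorrespondence}, so you have simply written out in full the injectivity and topological details that the paper leaves to the reference.
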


 This result is taken from~\cite[Corollaire 8.2.10]{EGA4.3}. The main task here
 is to prove the surjectivity of $s$: Given $\big( \scriptp_i \big) \in X_{\infty}$
 the set $\scriptq := \{[j,p] \in R_{\infty} \mid j \in I, p \in \scriptp_j\}$ is a
 prime ideal of $R_{\infty}$, that satisfies $\scriptq \cap R_i = \scriptp_i$ for
 any $i \in I$. The homeomorphism does not follow from the classical equivalence
 of the categories of affine schemes and commutative
 rings~\cite[\href{https://stacks.math.columbia.edu/tag/01YW}{Tag 01YW}]{SP}.
 A priori it is not clear, that the Zariski topology of $\spec \big( R_{\infty}
 \big)$ is the limit (i.e.~initial) topology of $X_{\infty}$.
 
\begin{corollary} \label{cor:speccorrespondence}
 We continue with the situation of \ref{lem:spechomeomorph}. For any $j \in I$ and
 any $a \in R_j$ let $B(a)$ denote the principal open set of $a$ in $X_j$. Respectively
 let $\scripta$ be an ideal of $R_j$, then we get the identities
 \begin{eqnarray*}
  \greekp_j^{-1} \Big( B(a) \Big) 
	& = & s \left( B \big( [j,a] \big) \right),                                          \\
	\greekp_j^{-1} \Big( \setV(\scripta) \Big)
	& = & s \left( \setV \Big( \scripta R_{\infty} \Big) \right).
 \end{eqnarray*}
\end{corollary}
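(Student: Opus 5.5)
The plan is to derive both identities directly from the commutative diagram of Lemma~\ref{lem:spechomeomorph}. That diagram gives $\greekp_j \circ s = \spec(\greeke_j)$, and $s$ is a bijection. Hence for any subset $Y \setin X_j$ we have
\[ \greekp_j^{-1}(Y) \ = \ s \Big( s^{-1} \big( \greekp_j^{-1}(Y) \big) \Big) \ = \ s \Big( \spec(\greeke_j)^{-1}(Y) \Big). \]
Both claims therefore reduce to the classical description of preimages of principal open and closed sets under the map $\spec(\greeke_j)$ induced by a ring homomorphism.

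For the first identity take $Y = B(a)$. A prime $\scriptq \in \spec(R_{\infty})$ lies in $\spec(\greeke_j)^{-1}(B(a))$ if and only if $\greeke_j^{-1}(\scriptq) = \scriptq \cap R_j$ does not contain $a$. This holds if and only if $\greeke_j(a) = [j,a] \nonin \scriptq$, that is, $\scriptq \in B([j,a])$. Thus $\spec(\greeke_j)^{-1}(B(a)) = B([j,a])$.

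For the second identity take $Y = \setV(\scripta)$. The condition $\scripta \setin \greeke_j^{-1}(\scriptq)$ is equivalent to $\greeke_j(\scripta) \setin \scriptq$. Since $\scriptq$ is an ideal, this is in turn equivalent to $\scripta R_{\infty} \setin \scriptq$, where $\scripta R_{\infty}$ is the ideal generated by $\greeke_j(\scripta)$. Hence $\spec(\greeke_j)^{-1}(\setV(\scripta)) = \setV(\scripta R_{\infty})$.

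Applying $s$ to both preimages gives the stated formulas. No step is a real obstacle. The only point needing care is the first one: it uses the bijectivity of $s$, which Lemma~\ref{lem:spechomeomorph} supplies (the surjectivity is the nontrivial part of that lemma), so that preimages under $\greekp_j$ are precisely the $s$-images of preimages under $\spec(\greeke_j)$.
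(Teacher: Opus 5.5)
Your proposal is correct and is essentially the paper's argument. Both rest on the commutative diagram $\greekp_j \circ s = \spec(\greeke_j)$ and the bijectivity of $s$, followed by the standard checks $a \nonin \greeke_j^{-1}(\scriptq) \Leftrightarrow [j,a] \nonin \scriptq$ and $\scripta \setin \greeke_j^{-1}(\scriptq) \Leftrightarrow \scripta R_{\infty} \setin \scriptq$; you merely package the reduction once as $\greekp_j^{-1}(Y) = s\big(\spec(\greeke_j)^{-1}(Y)\big)$, whereas the paper verifies each identity pointwise and invokes $s^{-1}$ explicitly for the reverse inclusion in the second.
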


\begin{proof}[Proof of~\ref{cor:speccorrespondence}]
 Let $(\scriptp_i) := s(\scriptq)$ for some prime ideal $\scriptq \ideal R_{\infty}$.
 For the first identity we have to show, that $\scriptq \in B([j,a])$ is equivalent
 to $(\scriptp_i) \in \greekp_i^{-1} \big( B(a) \big)$. But the latter is $\scriptp_j
 \in B(a)$, in other words $a \nonin \scriptp_j$. As $\scriptp_j = \scriptq \cap R_j
 = \greeke_j^{-1}(\scriptq)$ we find that $a \nonin \scriptp_j$ is equivalent to
 $[j,a] = \greeke_j(a) \nonin \scriptq$. And this again is $\scriptq \in B \big(
 [j,a] \big)$.  
	
 For the second we have to prove the identity of the pre-image $\greekp_j^{-1} \big(
 \setV( \scripta) \big)$ and the image of $\setV \big( \scripta R_{\infty} \big)$
 under $s$. The sets involved are given to be
 \begin{eqnarray*}
  \greekp_j^{-1} \Big( \setV(\scripta) \Big)
	& = & \set{\big( \scriptp_i \big) \mid \scripta \setin \scriptp_j},                \\
  s \left( \setV \Big( \scripta R_{\infty} \Big) \right)
  & = & \set{\big( \scriptq \cap R_i \big) \mid \scripta R_{\infty} \setin \scriptq}.
 \end{eqnarray*}
 If we start with $\scripta R_{\infty} \setin \scriptq$, then we also have
 $\big( \scripta R_{\infty} \big) \cap R_j \setin \scriptq \cap R_j = \scriptp_j$.
 Thus $s \big( \setV(\scripta R_{\infty}) \big)$ is contained in $\greekp_j^{-1}
 \big( \setV(\scripta) \big)$. Conversely, if we start with $\scripta \setin \scriptp_j$
 we first choose $\scriptq := s^{-1} \big( (\scriptp_i) \big)$ which is $\scriptq
 \cap R_i = \scriptp_i$ for any $i \in I$. Then we find that $\scripta R_{\infty}
 \setin \scriptp_j R_{\infty} = \big( \scriptq \cap R_j \big) R_{\infty} \setin
 \scriptq$.
\end{proof}

\begin{corollary} \label{cor:quasicompact}
 We continue with the situation of \ref{lem:spechomeomorph} and consider an arbitrary
 subset $U \setin X_{\infty}$. Then the following two statements are equivalent:
 \begin{enumerate}
  \item[(a)]
	 $U$ is open and quasi-compact.
	\item[(b)]
	 $U = \greekp_i^{-1}(A)$ for some $i \in I$ and $A \setin X_i$ open and
	 quasi-compact.
 \end{enumerate}
\end{corollary}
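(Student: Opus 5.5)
The approach is to transport everything to $\spec(R_{\infty})$ via the homeomorphism $s$ of Lemma~\ref{lem:spechomeomorph}. There the question becomes one about principal open sets and finitely generated ideals, and Corollary~\ref{cor:speccorrespondence} lets us pass back and forth between levels. Note that in this corollary the rings $R_i$ are arbitrary commutative rings, not necessarily noetherian, so quasi-compactness of $A$ must be kept as an explicit hypothesis in (b).

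\textbf{(b)$\implies$(a).} Since $\greekp_i$ is continuous, $U = \greekp_i^{-1}(A)$ is open. As $A$ is open and quasi-compact in $X_i = \spec(R_i)$, it is a finite union of principal open sets, say $A = B(a_1) \cup \dots \cup B(a_n)$ with $a_r \in R_i$. By Corollary~\ref{cor:speccorrespondence},
\[ U \ = \ \bigcup_{r=1}^n \greekp_i^{-1}\big(B(a_r)\big) \ = \ s\Big( \bigcup_{r=1}^n B\big([i,a_r]\big) \Big). \]
Principal open sets of the affine scheme $\spec(R_{\infty})$ are quasi-compact, so a finite union of them is quasi-compact. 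Because $s$ is a homeomorphism, $U$ is quasi-compact.

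\textbf{(a)$\implies$(b).} Put $U' := s^{-1}(U)$; it is open and quasi-compact in $\spec(R_{\infty})$. Since the principal open sets form a basis, quasi-compactness gives $U' = B(u_1) \cup \dots \cup B(u_n)$ for finitely many $u_r \in R_{\infty}$. Write $u_r = [i(r), b_r]$ and choose $i \in I$ with $i \geq i(r)$ for all $r$ (possible since $I$ is a net). Setting $a_r := \greekph_{i(r)}^i(b_r) \in R_i$, we get $u_r = [i,a_r]$. Now define $A := B(a_1) \cup \dots \cup B(a_n) \setin X_i$. It is open and quasi-compact, being a finite union of quasi-compact principal opens. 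By Corollary~\ref{cor:speccorrespondence},
\[ \greekp_i^{-1}(A) \ = \ \bigcup_r \greekp_i^{-1}\big(B(a_r)\big) \ = \ s\Big(\bigcup_r B([i,a_r])\Big) \ = \ s(U') \ = \ U. \]

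The main points needing care are the following. First, the direction (a)$\implies$(b) relies essentially on Lemma~\ref{lem:spechomeomorph}: the basis statement about $\spec(R_{\infty})$ must be transferred to $X_{\infty}$ with its limit topology, and $s$ being a homeomorphism is exactly what allows this. Second, the common upper bound $i$ exists only because $I$ is directed. Finally, in (b)$\implies$(a) we use the standard fact that $B(f)$ is quasi-compact in any spectrum, as it is homeomorphic to $\spec(R_f)$. All other steps are routine.
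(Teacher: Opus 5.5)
Your proof is correct, and your (b)$\Rightarrow$(a) is essentially the paper's argument: a finite cover of $A$ by principal opens, Corollary~\ref{cor:speccorrespondence}, and quasi-compactness of $B([i,a])$ in $\spec(R_\infty)$ carried over by $s$.

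The difference lies in (a)$\Rightarrow$(b). The paper does not prove this direction; it cites EGA~IV, Corollaire~8.2.11, a general result on limits of quasi-compact, quasi-separated schemes with affine transition maps. You instead give a short self-contained argument: finitely many $B(u_r)$ cover $s^{-1}(U)$, you lift the $u_r$ to a common level using directedness, and you pull back via Corollary~\ref{cor:speccorrespondence}. This makes the affine case independent of EGA, in line with the paper's aim of elementary proofs. The citation, in return, covers the general scheme-theoretic setting.

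One correction to your closing remarks: (a)$\Rightarrow$(b) does not really depend on Lemma~\ref{lem:spechomeomorph}. Because $I$ is directed, the sets $\pi_j^{-1}(B(a))$ with $j \in I$ and $a \in R_j$ already form a basis of the limit topology of $X_\infty$. So quasi-compactness of $U$ directly gives finitely many of them covering $U$. The identity $\pi_j^{-1}(B(a)) = \pi_i^{-1}\big(B(\varphi_j^i(a))\big)$ for $j \leq i$ then moves them to a common level, without ever using $s$.

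The lemma is indispensable rather in (b)$\Rightarrow$(a). There you need the surjectivity of $s$ to get $\pi_i^{-1}(B(a)) \subseteq s(B([i,a]))$, which is what makes $\pi_i^{-1}(B(a))$ quasi-compact.
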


\begin{proof}[Proof of~\ref{cor:quasicompact}]
 The implication (a)$\implies$(b) is~\cite[Cor.~8.2.11]{EGA4.3}. We verify
 (b)$\implies$(a) only: For any $i \in I$ the principal open sets $\scriptB_i
 := \set{B(a) \mid a \in R_i}$ form a basis of the topology of $X_i$ so as $A
 \setin X_i$ is open there are $a_{\greekl} \in R_i$ (where $\greekl \in \greekL$)
 such that $A$ is covered by the $B(a_{\greekl})$. But as $A$ also is quasi-compact
 there is a finite subset $\greekO \setin \greekL$ such that the $\greekl \in \greekO$
 suffice to cover $A$:
 \[ A \ = \ \bigcup_{\greekl \in \greekL} B \big( a_{\greekl} \big) \ = \
    \bigcup_{\greekl \in \greekO} B \big( a_{\greekl} \big). \]
 By \ref{cor:speccorrespondence}.(i) we have $\greekp_i^{-1} \big( B(a_{\greekl})
 \big) = s \big( B([i,a_{\greekl}]) \big)$. As $B([i,a_{\greekl}])$ is a principal
 open set of $R_{\infty}$ it is quasi-compact (this is true for all commutative rings).
 And as $s$ is a homeomorphism this means $\greekp_i^{-1} \big( B(a_{\greekl}) \big)$
 is quasi-compact, too. But
 \[ U \ = \ \greekp_i^{-1}(A) \ = \ \greekp_i^{-1} \left( \bigcup_{\greekl \in \greekO}
    B \big( a_{\greekl} \big) \right) \ = \ \bigcup_{\greekl \in \greekO} \greekp_i^{-1}
		\left( B \big( a_{\greekl} \big) \right), \]
 such that $U$ is a finite union of these sets. But a finite union of quasi-compact
 sets stays quasi-compact. And $U$ clearly is open, as $\greekp_i$ is continuous and
 $A$ is open.
\end{proof}

\begin{proof}[Proof of Theorem~\ref{thm:specquasicompact}]
 {\bf Part 1}: (a) to (c) are equivalent. Hereby (a)$\implies$(b) and (b)$\implies$(c)
 are generally true, see part 2 of the proof of \ref{thm:affineresult} for a few
 comments on this. In (c)$\implies$(a) we have $U = \greekp_k^{-1}(A)$ for some open
 set $A \setin X_k$. By assumption $R_k$ is a noetherian ring and thereby $X_k$ a
 noetherian topological space. Hence $A \setin X_k$ is quasi-compact and by
 \ref{cor:quasicompact} this makes $U = \greekp_k^{-1}(A)$ quasi-compact.
	
 {\bf Part 2}: (c) and (d) are equivalent. In (c)$\implies$(d) we consider an open
 $V \setin X_{\infty}$ that also is quasi-compact. By part 1, $V$ is weakly-stable
 and by assumption $U$ is weakly stable, too. Hence $U \cap V$ is weakly stable and
 thereby quasi-compact. We prove (d)$\implies$(a) next: $X_{\infty}$ is homeomorphic
 $\spec \big( R_{\infty} \big)$ by \ref{lem:spechomeomorph} and thereby $X_{\infty}$
 is a quasi-compact topological space. In this case retro-compact implies quasi-compact.

 {\bf Part 3}: (c) and (e) are equivalent. The implication (c)$\implies$(e) is trivial
 by definition of cylinder sets, so we turn to (e)$\implies$(c): Let $U = \greekp_i^{
 -1}(W)$ for some quasi-constructible subset $W \setin X_i$. That is $W = L_1 \cup
 \dots \cup L_n$ for some locally closed $L_m$. Say $L_m = U_m \cap C_m$ for some
 $U_m \setin X_i$ open and $C_m \setin X_i$ closed. As $X_i$ is a noetherian space,
 any $U_m$ is quasi-compact. Hence $\greekp_i^{-1}(U_m) \setin X_{\infty}$ is open and
 quasi-compact, by \ref{cor:quasicompact}. And $\greekp_i^{-1}(C_m) \setin X_{\infty}$
 is closed, such that $\greekp_i^{-1}(L_m) = \greekp_i^{-1}(U_m) \cap \greekp_i^{-1
 }(C_m) \setin \greekp_i^{-1}(U_m)$ is closed. As $\greekp_i^{-1}(U_m)$ is quasi-compact,
 the closed subset $\greekp_i^{-1}(L_m)$ inherits this. But then $U = \greekp_i^{-1}(W)
 = \greekp_i^{-1}(L_1) \cup \ldots \cup \greekp_i^{-1}(L_n)$ is quasi-compact, as it
 is a finite union of such sets. By part 1 yields (a) is (c).
 
 {\bf Part 4}: (a) and (f) are equivalent. Generally a subset $U$ of the prime spectrum
 of $R_{\infty}$ is open and quasi-compact, iff $U = X_{\infty} \setminus \setV(\scriptu)$
 for some finitely generated ideal $\scriptu \ideal R_{\infty}$. Starting in (a) we
 find this $\scriptu$ and by \ref{pro:limcompact}.(iii) finitely generated implies
 quasi-finite. Starting in (f) we have $U = X_{\infty} \setminus \setV(\scriptu)$ for
 some $\scriptu = \scripta R_{\infty}$, where $i \in I$ and $\scripta \ideal R_i$. As
 $R_i$ was assumed to be a noetherian ring $\scripta$ is finitely generated and hence
 $\scriptu$ inherits being finitely generated. By the general argument this implies
 $U$ to be quasi-compact.
\end{proof}

 Comparing this to EGA the implication (a)$\implies$(d) is \cite[Prop.~1.2.7]{EGA4.1}
 and the implication (a)$\implies$(c) is \cite[Corollaire 8.2.11]{EGA4.3}. De Fernex
 and Docampo noticed in the introduction to Section 10 of \cite{dFDo}, that
 \cite[Corollaire 8.3.11]{EGA4.3} implies, that cylinder sets are precisely the
 finite Boolean combinations of open, retro-compact sets.

\begin{lemma} \label{lem:ultraconstruct} \cite[Corollaire 1.9.9]{EGA4.1}
 Let $R$ be a commutative ring and $X = \spec(R)$ be its prime spectrum. Let $W$ be
 the union of constructible sets~$W_{\greekl}$. If $W$ is con-\break structible,
 then $W$ is the union of finitely many $W_{\greekl}$.
\end{lemma}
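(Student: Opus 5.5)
The plan is to deduce the statement from the fact that $X = \spec(R)$, endowed with the \emph{constructible} (patch) topology, is a compact space. In that topology the constructible subsets of $X$ (finite Boolean combinations of open, retro-compact sets, in the sense of Def.~\ref{def:constructible}) are exactly the clopen sets. Once this is available, the lemma is a direct compactness argument: $W$ is constructible, hence closed in the patch topology and therefore compact there. Each $W_{\greekl}$ is constructible, hence open in the patch topology. The family $(W_{\greekl})$ is thus an open cover of a compact set, and finitely many members already cover $W$. Since each $W_{\greekl} \setin W$, these finitely many sets have union exactly $W$.

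The key steps, in order, are as follows.
\begin{enumerate}
 \item[(1)] Recall that in $X = \spec(R)$ the open retro-compact sets are precisely the open quasi-compact sets, i.e.\ the finite unions of principal open sets $B(a)$. This holds because $X$ is quasi-compact and intersections $B(a) \cap B(b) = B(ab)$ stay quasi-compact. Consequently every constructible set is a finite Boolean combination of sets $B(a)$ and their complements $\setV(a)$.
 \item[(2)] Define the patch topology on $X$ as the topology generated by all sets $B(a)$ and $\setV(a)$. By construction, constructible sets are open in it, and, being closed under complements, they are clopen.
 \item[(3)] Prove that $X$ is compact in the patch topology.
 \item[(4)] Run the covering argument described above.
\end{enumerate}

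Step (3) is the main obstacle. I would prove it by embedding $X$ into $\{0,1\}^R$ via $\scriptp \mapsto \mathbf{1}_{\scriptp}$, where $\mathbf{1}_{\scriptp}$ is the indicator function of $\scriptp$. This map is injective, and it turns the patch topology into the subspace topology, since the subbasic sets $\setV(a)$ and $B(a)$ are the preimages of the cylinder conditions ``coordinate $a$ equals $1$'' and ``equals $0$'', respectively. The image is the set of indicator functions of prime ideals, which is cut out by conditions involving only finitely many coordinates each: $0 \in \scriptp$, $1 \nonin \scriptp$, $a,b \in \scriptp \Rightarrow a+b \in \scriptp$, $a \in \scriptp \Rightarrow ab \in \scriptp$, and $ab \in \scriptp \Rightarrow a \in \scriptp$ or $b \in \scriptp$. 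Each such condition defines a closed subset of $\{0,1\}^R$, so the image is closed. By Tychonoff's theorem for the product of finite discrete spaces, the image is compact.

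As an alternative to step (3), one could avoid Tychonoff and argue by contradiction with Zorn's lemma. Suppose no finite subfamily covers $W$. Then the family $\{W\} \cup \{X \setminus W_{\greekl}\}$ of constructible sets has the finite intersection property. Extend it to a maximal family $\mathcal{F}$ of constructible sets with that property. From $\mathcal{F}$, set $\scriptp := \set{a \in R \mid \setV(a) \in \mathcal{F}}$, check that $\scriptp$ is a prime ideal, and check that $\scriptp$ lies in every member of $\mathcal{F}$. That last check is the tedious part. Such a $\scriptp$ lies in $W$ but in no $W_{\greekl}$, which is a contradiction. I would present the Tychonoff version, since it is shorter and self-contained.
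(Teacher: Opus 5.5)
Your proof is correct and has the same architecture as the paper's: pass to the patch topology, note that constructible sets are patch-clopen, show that $X$ is patch-compact, and extract a finite subcover of the patch-closed set $W$ from the patch-open $W_{\greekl}$. The two routes differ only in step (3). The paper uses the ultrafilter criterion for compactness. For an ultrafilter $\scriptU$ on $X$ it sets $\scriptp_{\smallscriptU} := \set{b \in R \mid \setV(b) \in \scriptU}$ and checks that this ideal is prime. It then shows that every basic set $B(a) \cap \setV(b_1) \cap \dots \cap \setV(b_n)$ containing $\scriptp_{\smallscriptU}$ lies in $\scriptU$, so $\scriptU$ converges to $\scriptp_{\smallscriptU}$. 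You instead identify the patch space with a closed subspace of $\{0,1\}^R$ and apply Tychonoff. Your version reduces the work to the observation that each axiom of a prime ideal involves only finitely many coordinates, and it gives Hausdorffness of the patch topology as a by-product. The paper's version builds the limit point by hand and needs no product space. Both rest on the same choice principle, the ultrafilter lemma. Your Zorn alternative is essentially the paper's argument restricted to the Boolean algebra of constructible sets. The step you call tedious is in fact short. A maximal family with the finite intersection property contains a member of every finite union it contains, and every constructible set is a finite union of the basic sets above. Finally, in step (4) you correctly use that $W$ is constructible and not merely quasi-constructible (the paper's proof writes ``quasi-constructible'' at this point). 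Only constructibility guarantees that $W$ is patch-closed.
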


 Note that this need not be true, if we replace \emph{constructible} by
 \emph{quasi-constructible}. Example~\cite[\href{https://stacks.math.columbia.edu/tag/01K8}{Tag 01K8}]{SP}
 of the stacks project depicts a situation in which the modified statement
 fails.

\begin{proof}[Proof of Lemma~\ref{lem:ultraconstruct}]
 The following proof is inspired by \cite[Lemma 3.1]{FFL}. In the claim $X$ carries
 the usual Zariski-topology. We will equip $X$ with the \emph{patch topology} now:
 The smallest topology, that contains the open base sets $B(a)$ and the closed sets
 $\setV(b)$ for $a$, $b \in R$. Thereby any $B(a)$ is patch-clopen. And as open,
 quasi-compact sets are finite unions of such sets, they are patch-clopen, too.
 From this we find, that any constructible subset $W \setin X$ is patch-clopen.
 A base of this topology is given by
 \[ \scriptB \ := \ \set{B(a) \cap \setV(b_1) \cap \dots \cap \setV(b_n)
    \mid a, b_1, \ldots, b_n \in R} \]
 Let $\scriptU$ be an ultrafilter in $X$. Then $\scriptp_{\smallscriptU} := \set{b
 \in R \mid \setV(b) \in \scriptU}$ is a prime ideal of $R$: The filter properties
 turn it into a proper ideal and the ultrafilter property yields being prime. From
 the filter property of $\scriptU$ we also get the following implication
 \[ \scriptp_{\smallscriptU} \in B \in \scriptB \ \implies \ B \in \scriptU \]
 Consider any patch-neighborhood $N$ of $\scriptp_{\smallscriptU}$, as $\scriptB$
 is a base of the patch topology there is some $B \in \scriptB$ such that $\scriptp_{
 \smallscriptU} \in B \setin N$. By the implication above we have $B \in \scriptU$.
 Then $B \setin N$ yields $N \in \scriptU$, as a filter is upward closed. Thus the
 entire neighborhood filter $\scriptN(\scriptp_{\smallscriptU})$ is contained in
 $\scriptU$. This means $\scriptU \to \scriptp_{\smallscriptU}$ and by the ultrafilter
 criterion \cite[Thm.~2.1]{Moor} this makes $X$ quasi-compact in the patch-topology.
 Now consider $W \setin X$ which is the union of the constructible sets $W_{\greekl}$
 in the Zariski topology. As these are patch-open, we have an open cover of $W$
 \[ W \ = \ \bigcup_{\greekl \in \greekL} W_{\greekl} \]
 By assumption $W$ is quasi-constructible, too, hence it is patch-closed. As $X$ is
 quasi-compact in the patch-topology, so is the closed subset $W \setin X$. Thus
 the open cover has a finite subcover.
\end{proof}
	
\begin{corollary} \label{cor:constructiblecompact}
 In the situation of Theorem~\ref{thm:specquasicompact}, let $C$ be the union of cylinder
 sets~$C_{\greekl}$ in $X_\infty$. If $C$ is a cylinder set, then $C$ is the union of
 finitely many $C_{\greekl}$.
\end{corollary}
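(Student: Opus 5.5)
The plan is to transport the statement to $\spec(R_{\infty})$ via the homeomorphism $s$ of Lemma~\ref{lem:spechomeomorph}, and then apply Lemma~\ref{lem:ultraconstruct} there. The only real work is showing that cylinder sets in $X_{\infty}$ correspond exactly to constructible subsets of $\spec(R_{\infty})$, or at least that they are constructible. That inclusion is all the argument needs.

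\emph{Cylinder sets are constructible.} By Definition~\ref{def:cylinderset}(b), every cylinder set is a finite union
\[ C \ = \ \bigcup_{m=1}^n \greekp_{i(m)}^{-1}\big(U_m \cap C_m\big) \]
with $U_m \setin X_{i(m)}$ open and $C_m \setin X_{i(m)}$ closed. Since $R_{i(m)}$ is noetherian, $X_{i(m)}$ is a noetherian space and each $U_m$ is quasi-compact. Corollary~\ref{cor:quasicompact} then shows that $\greekp_{i(m)}^{-1}(U_m)$ is open and quasi-compact in $X_{\infty}$.

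For the closed part, write $C_m = X_{i(m)} \setminus U_m'$ with $U_m'$ open. Then $\greekp_{i(m)}^{-1}(C_m)$ is the complement of the open quasi-compact set $\greekp_{i(m)}^{-1}(U_m')$.

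In $\spec(R_{\infty})$, open quasi-compact sets are retro-compact: by Part~4 of the proof of Theorem~\ref{thm:specquasicompact} they are complements of $\setV$ of finitely generated ideals, i.e.\ finite unions of principal opens, and $B(f)\cap B(g)=B(fg)$. Hence each such set is open and retro-compact. Pulling back along $s$, each $s^{-1}\big(\greekp_{i(m)}^{-1}(L_m)\big)$ is a finite Boolean combination of open retro-compact sets, hence constructible in $\spec(R_{\infty})$. The same holds for $C$ as a finite union.

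\emph{Conclusion.} Apply this to $C$ and to every $C_{\greekl}$. Then $s^{-1}(C) = \bigcup_{\greekl} s^{-1}(C_{\greekl})$ is a constructible set that is a union of constructible sets. Lemma~\ref{lem:ultraconstruct}, with $R = R_{\infty}$, yields finitely many $\greekl$ whose sets already cover $s^{-1}(C)$. Applying the bijection $s$ gives the finite subcover of $C$.

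\emph{Main obstacle.} I expect the delicate point to be the correct use of Lemma~\ref{lem:ultraconstruct}. Its proof needs the union $W$ to be patch-closed, so constructibility of $C$ in the EGA sense is essential, not merely quasi-constructibility. This is exactly why I reduce everything to open quasi-compact, hence patch-clopen, sets via Corollary~\ref{cor:quasicompact}. The noetherian hypothesis on the $R_i$ enters precisely here, to make $U_m$ and $U_m'$ quasi-compact.
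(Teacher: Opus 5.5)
Your proposal is correct and follows the paper's route: show that cylinder sets are constructible, transport along the homeomorphism $s$ of Lemma~\ref{lem:spechomeomorph}, and apply Lemma~\ref{lem:ultraconstruct}. The only difference is how constructibility is checked. You derive it explicitly from Definition~\ref{def:cylinderset}(b), using Corollary~\ref{cor:quasicompact} and the retro-compactness of open quasi-compact subsets of a spectrum. The paper instead cites Definition~\ref{def:cylinderset}(a) together with the equivalence of ``open and weakly stable'' with ``open and retro-compact'' (Theorem~\ref{thm:specquasicompact}, (c)$\Leftrightarrow$(d)).
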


\begin{proof}[Proof of Corollary~\ref{cor:constructiblecompact}]
 By property (a) in Definition \ref{def:cylinderset} any cylinder set is constructible
 in $X_{\infty}$. And by Lemma~\ref{lem:spechomeomorph} $X_{\infty}$ carries the
 topology of a prime spectrum. Thus the claim immediately follows from Lemma
 \ref{lem:ultraconstruct}.
\end{proof}

\section{Examples and Discussion}
\label{sec:counterexamples}

\begin{example} \label{xmp:notquasicompact}
 For the field $F = \overline{\setQ}$ of algebraic numbers, the topological space
 $F^{\setN}$ is not quasi-compact. More generally, consider any countable, algebraically
 closed field $F = \set{a_1,a_2,\ldots}$ and the $F$-algebra epimorphism onto the
 function field
 \[ \greekee \colon \, F[t_i \mid i \in \setN] \onto F(s) \ : \ t_i \, \mapsto \,
    \begin{cases} s, & \quad i = 0, \\ \frac{1}{s-a_i}, & \quad i \geq 1. \end{cases} \]
 Note that surjectivity is guaranteed by $F$ being algebraically closed. As~$F(s)$
 is a field, $\scriptm := \kn(\greekee)$ is a maximal ideal in $F[t_i \mid i \in
 \setN]$. Consider its restrictions
 \[ \scriptp_i \ := \ \scriptm \cap F[t_0,t_1,\ldots,t_i] \ \subseteq \ F[t_0,\ldots, t_i]. \]
 As $\scriptm$ is a proper ideal, each $\scriptp_i$ is so, too. Hence $\setV(\scriptp_i)
 \setin F^I$ is non-empty, by the weak Nullstellensatz. For $I = \{0,\ldots,i\}$ consider
 the projection $\greekp_i \colon F^{\setN} \to F^I$. Then the following closed set is
 non-empty, as well:
 \[ C_i \ := \ \greekp_i^{-1} \Big( \setV(\scriptp_i) \Big) \ \setin \ F^{\setN}. \]
 By construction, $\scriptp_i = \scriptm \cap F[t_0,\ldots,t_i] \setin \scriptm \cap
 F[t_0, \ldots,t_i,t_{i+1}] = \scriptp_{i+1}$, such that $\setV(\scriptp_{i+1}) \setin
 \setV(\scriptp_i)$ and hence $C_{i+i} \setin C_i$. That is, the $C_i$ form a descending
 chain of closed sets. We claim
 \[ C \ := \ \bigcap_{i \geq 1} C_i \ = \ \emptyset. \]
 Suppose there was an $x$ in $C$. Then $x = (x_0,x_1,\ldots) \in F^{\setN}$ and, as
 $x_0 \in F$, we have $x_0 = a_k$ for some $k \geq 1$. Consider the polynomial
 \[ f_k \ := \ (t_0 - a_k) t_k - 1  \ \in \ \scriptm \cap F[t_0,\ldots,t_k] \ = \ \scriptp_k. \]
 Then $f_k(x_0,\ldots,x_k) = -1 \noneq 0$, hence $x \nonin C_k$; a contradiction.
 Consequently $C$ is empty. Let $U_i := F^{\setN} \setminus C_i$ be the complement
 of $C_i$. By the above, we obtain
 \[ \bigcup_{i \geq 1} U_i \ = \ \bigcup_{i \geq 1} F^{\setN} \setminus C_i \ = \
    F^{\setN} \setminus \bigcap_{i \geq 1} C_i \ = \ F^{\setN}. \]
 Thus the $U_i$ form an open cover of $F^{\setN}$. Moreover, as the $C_i$ form a
 descending chain, the $U_i$ form an ascending chain. If $F^{\setN}$ were quasi-compact,
 then we would have $F^{\setN} = U_m$ for some $m \in \setN$. Since $C_m \ne \emptyset$,
 this would lead to a contradiction:
 \[ F^{\setN} \ = \ U_m \ = \ F^{\setN} \setminus C_m \ \ne \ F^{\setN}. \]
\end{example}

\begin{remark}
 One can turn $F^{\setN}$ into another topological space by regarding it not as the
 \emph{inverse} limit $\invlim F^n$, but as the \emph{direct} limit $\dirlim F^n$,
 where $F^n \setin F^{n+1}$ is defined by $x_{n+1} = 0$. Then $F^{\setN}$ is a basic
 example of an ind-scheme and will never be quasi-compact, as the inclusions $F^n
 \setinn F^{n+1}$ are strict; see~\cite[Lem.~1.22]{Rich}.
\end{remark}

 Drawing on Example~\ref{xmp:notquasicompact}, we point out limits on generalizing
 Chevalley’s classical theorem: morphisms of finite type between noetherian schemes
 map constructible sets to constructible sets.

\begin{example} \label{xmp:notstable}
 For any countable algebraically closed field $F = \{a_1,a_2,\ldots\}$, consider the
 inclusion of polynomial rings $F[t_1] \subseteq F[t_i \mid i \in \setN]$ and let
 $\greekp \colon X_\infty \to X_1$ be the corresponding morphism of the associated
 prime spectra. As in Example~\ref{xmp:notquasicompact}, look at the epimorphism
 \[ \greekee \colon \, F[t_i \mid i \in \setN] \onto F(s) \ : \ t_i \, \mapsto \,
    \begin{cases} s, & \quad i = 0, \\ \frac{1}{s-a_i}, & \quad i \geq 1 \end{cases} \]
 and the maximal ideal $\scriptm := \kn(\greekee)$ in $F[t_i \mid i \in \setN]$. 
 We claim $\greekp(\scriptm) = 0$. Indeed, $f \in \scriptm \cap F[t_1]$ means $f
 \in F[t_1]$ and $\greekee(f) = 0$. The latter is $f \big( 1/(s-a_1) \big) = 0$.
 But as $\{1/(s-a_1)\} \setin F(s)$ is algebraically independent over $F$, this
 implies $f = 0$.
 
 As $\scriptm$ is maximal, $\set{\scriptm} \setin X_\infty$ is closed and finite
 (hence retro-compact). In $X_1 = \spec \big( F[t_1] \big)$ the quasi-constructible
 subsets are precisely the finite subsets of closed points and their complements,
 hence precisely the closed or open subsets. However, $\greekp(\set{\scriptm})
 = \set{0} \setin X_1$ is neither closed (it contains the generic point) nor open
 (it is not cofinite) and hence not quasi-constructible.
\end{example}

 Recall the situation of Theorem~\ref{thm:specquasicompact}, $X_{\infty}$ is the
 inverse limit of prime spectra of noetherian rings. If $U \setin X_{\infty}$ is an
 open, quasi-compact set, then $U = \greekp_k^{-1}(A)$ for some $k \in I$ and
 $A \setin X_k$ open. In particular $U = \greekp_k^{-1} \big( \greekp_k(U) \big)$,
 that is $U$ satisfies the stability condition (f) in affine spaces, as given in
 \ref{thm:affineresult}. The following example points out that quasi-compactness
 and this stability condition cannot be equivalent, for prime spectra of general
 commutative rings.

\begin{example} \label{xmp:notequivalent}
 Let $S := \setC[t_1,t_2,\dots]$ and $R_i := \setC[t_1,\dots,t_i][t_j^2, t_j^3 \mid
 j > i]$ for any $i \in \setN$. Then it is clear that $R_0 \setin R_1 \setin
 \ldots$ forms an ascending chain of rings, $S$ is the union of all the $R_i$ and
 hence $S = R_{\infty}$ is the direct limit, canonically. As $R_i = A_i[t_i]$ and
 $R_{i-1} = A_i[t_i^2, t_i^3]$ for a suitable ring $A_i$ we see, that $R_{i-1} \setin
 R_i$ is a subintegral extension, from \cite[Example~2.6]{Vitu}. In particular the
 corresponding maps $X_i \to X_{i-1}$ are bijective. Consequentially the canonical
 projections $X_{\infty} \to X_i$ are bijective, as well. In particular we have
 (for any $U \setin X_{\infty}$ and any $i \in \setN$)
 \[ U \;\; = \;\; \greekp_i^{-1} \big( \greekp_i(U) \big). \]
 Yet $S$ contains the maximal ideal $\scriptm := \gen{t_1,t_2,\dots}$. We let $C
 := \set{\scriptm}$ and $U = X_{\infty} \setminus C$. Then $U$ is open, but \emph{not}
 quasi-compact, as $\scriptm$ contains infinitely many variables $t_j$: For any
 $1 \leq n \in \setN$ the ideal $\gen{t_1,\dots,t_n}$ lies in $U$ but not in the
 union $B(t_1) \cup \ldots \cup B(t_n)$. By the letter $U$ satisfies (f) in
 Theorem \ref{thm:affineresult}, but neither (a) in Theorem \ref{thm:affineresult}
 nor (a) in Theorem \ref{thm:specquasicompact}.
\end{example}

\begin{remark} \label{rem:closingremarks}
 Let $F$ be an algebraically closed field, $L$ a set and $\scriptI$ be the set of
 all finite subsets of $L$. We summarize key facts showing that inverse limits
 of maximal spectra and of prime spectra can exhibit different quasi-compactness
 properties:
 \begin{enumerate}
  \item
	 The affine space $F^L$ is the inverse limit of $F^I$ where $I \in \scriptI$.
	 Hereby $F^I$ identifies with the maximal spectrum of $F[I]$.
  \item
   $F^L$ is quasi-compact when $\abs{L} < \abs{F}$ according to Theorem~\ref{thm:affineresult},
	 but need not be so, if $\abs{L} = \abs{F}$, see Example~\ref{xmp:notquasicompact}.
  \item
   Likewise the prime spectrum $\spec \big( F[L] \big)$ is the inverse limit of the
	 prime spectra $\spec \big( F[I] \big)$ where $I \in	\scriptI$, due to
	 Lemma~\ref{lem:spechomeomorph}.
  \item
	 $\spec \big( F[L] \big)$ always is quasi-compact, this is true for any commutative
	 ring.
 \end{enumerate} 
\end{remark}

\section*{Acknowledgments}

The author would like to express his sincere gratitude to Professor Jürgen Hausen
for his support and invaluable advice.

\end{document}